\documentclass[final]{siamart}
\usepackage[T1]{fontenc}
\usepackage[latin9]{inputenc}
\usepackage{amstext}
\usepackage{amssymb}
\usepackage{graphicx}

\makeatletter

\providecommand{\tabularnewline}{\\}

\numberwithin{equation}{section}
\newsiamthm{thm}{\protect\theoremname}
  \newsiamremark{assume}{Assumption}
\newsiamremark{rem}{\protect\remarkname}
  \newsiamremark{algor}{Algorithm}
\newsiamthm{lem}{\protect\lemmaname}
\newsiamthm{cor}{\protect\corollaryname}
\newsiamthm{prop}{\protect\propositionname}

\usepackage{etoolbox}
\usepackage{hyperref}
\usepackage{color}

\newcommand{\TheTitle}{GMRES-Accelerated ADMM for Quadratic Objectives}  
\newcommand{\TheAuthors}{R. Y. Zhang and J. K. White}
\headers{\TheTitle}{\TheAuthors}

\emergencystretch 3em

\usepackage[font={footnotesize,it},labelfont=sc]{caption}

\BeforeBeginEnvironment{tabular}{\scriptsize}

\@ifundefined{showcaptionsetup}{}{%
 \PassOptionsToPackage{caption=false}{subfig}}
\usepackage{subfig}
\makeatother

  \providecommand{\corollaryname}{Corollary}
  \providecommand{\lemmaname}{Lemma}
  \providecommand{\propositionname}{Proposition}
  \providecommand{\remarkname}{Remark}
\providecommand{\theoremname}{Theorem}

\begin{document}

\title{GMRES-Accelerated ADMM for Quadratic Objectives\thanks{This work was supported in part by the Skolkovo-MIT initiative in
Computational Mathematics.}}

\author{Richard Y. Zhang\thanks{Department of Electrical Engineering and Computer Science, Massachusetts
Institute of Technology, Cambridge, MA 02139 (\texttt{ryz@alum.mit.edu}).} \and Jacob K.White\thanks{Department of Electrical Engineering and Computer Science, Massachusetts
Institute of Technology, Cambridge, MA 02139 (\texttt{white@mit.edu}). }}
\maketitle
\begin{abstract}
We consider the sequence acceleration problem for the alternating
direction method-of-multipliers (ADMM) applied to a class of equality-constrained
problems with strongly convex quadratic objectives, which frequently
arise as the Newton subproblem of interior-point methods. Within this
context, the ADMM update equations are linear, the iterates are confined
within a Krylov subspace, and the General Minimum RESidual (GMRES)
algorithm is optimal in its ability to accelerate convergence. The
basic ADMM method solves a $\kappa$-conditioned problem in $O(\sqrt{\kappa})$
iterations. We give theoretical justification and numerical evidence
that the GMRES-accelerated variant consistently solves the same problem
in $O(\kappa^{1/4})$ iterations for an order-of-magnitude reduction
in iterations, despite a worst-case bound of $O(\sqrt{\kappa})$ iterations.
The method is shown to be competitive against standard preconditioned
Krylov subspace methods for saddle-point problems. The method is embedded
within SeDuMi, a popular open-source solver for conic optimization
written in MATLAB, and used to solve many large-scale semidefinite
programs with error that decreases like $O(1/k^{2})$, instead of
$O(1/k)$, where $k$ is the iteration index.
\end{abstract}

\begin{keywords}
ADMM, Alternating direction, method-of-multipliers, augmented Lagrangian,
sequence acceleration, GMRES, Krylov Subspace 
\end{keywords}

\begin{AMS}
49M20, 90C06, 65B99 
\end{AMS}

\section{Introduction}

\global\long\def\Lag{\mathscr{L}}
\global\long\def\R{\mathbb{R}}
\global\long\def\C{\mathbb{C}}
\global\long\def\S{\mathbb{S}}
\global\long\def\P{\mathbb{P}}
\global\long\def\diag{\mathrm{diag}\,}
\global\long\def\GMRES{\mathbf{GMRES}}
\global\long\def\lb{\mathrm{lb}}
\global\long\def\Real{\mathrm{Re}\,}
\global\long\def\Imag{\mathrm{Im}\,}
\global\long\def\tr{\mathrm{tr}\,}
The alternating direction method-of-multipliers (ADMM)~\cite{glowinski1975approximation,gabay1976dual}
is a popular first-order optimization algorithm, used to establish
consensus between many local subproblems and a global master problem.
This sort of \emph{decomposable} problem structure naturally arises
over a wide range of applications, from statistics and machine learning,
to the solution of semidefinite programs; see~\cite{boyd2011distributed}
for an overview. Part of the appeal of ADMM is that it is simple and
easy to implement at a large scale, and that convergence is guaranteed
under very mild assumptions. On the other hand, ADMM can converge
very slowly, sometimes requiring thousands of iterations to compute
solutions accurate to just 2-3 digits.

Sequence acceleration\textemdash the idea of extrapolating information
collected in past iterates\textemdash can improve the convergence
rates of first-order methods, thereby allowing them to compute more
accurate solutions. Within this context, the technique known as momentum
acceleration has been particularly successful. Originally developed
by Nesterov for the basic gradient method~\cite{nesterov1983method},
the technique has since been extended to a wide range of first-order
methods~\cite{nesterov2005smooth,beck2009fast,becker2011nesta}.
In each case, the momentum-accelerated variant improves upon the basic
first-order method by an order of magnitude, achieving a convergence
rate that is provably optimal. 

Unfortunately, sequence acceleration has been less successful for
ADMM. Momentum-accelerated ADMM schemes have been proposed~\cite{goldstein2014fast,patrinos2014douglas,ouyang2015accelerated,kadkhodaie2015accelerated},
but the reductions in iterations have been modest, and limited to
special cases where the objectives are assumed to be strongly convex
and/or quadratic. Part of the problem is that ADMM is a difficult
method to analyze, even in the simple quadratic case. The most widely
used technique to accelerate ADMM in practice is simple over-relaxation,
which reliably reduces total iteration count by a small constant~\cite{wen2010alternating,giselsson2014diagonal,ghadimi2015optimal,nishihara2015general}.
It remains unclear whether an order-of-magnitude acceleration is even
achievable for ADMM in the first place. 

\subsection{\label{subsec:intro_gmres}Sequence acceleration using GMRES}

In this paper, we restrict our attention to the ADMM solution of the
equality-constrained quadratic program
\begin{alignat}{2}
 & \text{minimize } & \frac{1}{2}x^{T}Dx+c^{T}x+p^{T}z & \tag{ECQP}\label{eq:ecqp}\\
 & \text{subject to } & Ax+Bz=d,\nonumber 
\end{alignat}
which frequently arises as the Newton subproblem of interior-point
methods. The Karush\textendash Kuhn\textendash Tucker (KKT) equations
for (\ref{eq:ecqp}) are linear, so first-order methods applied to
(\ref{eq:ecqp}) reduce to a matrix iteration of the form
\begin{equation}
u^{k+1}=G(\beta)u^{k}+b(\beta),\label{eq:admm_iters_intro}
\end{equation}
where $u=[x;z;y]$ collects the primal-dual variables, and $\beta>0$
is the ADMM quadratic-penalty / step-size parameter.

Sequence acceleration for matrix iterations is a well-studied topic.
While the optimal acceleration scheme is rarely available as an analytical
expression, it is attained numerically by the Generalized Minimum
RESidual (GMRES) algorithm~\cite{saad1986GMRES}; we review this
point in detail in Section~\ref{sec:ADMM-for-Quadratic}. Using GMRES
to accelerate a matrix iteration, the resulting iterates are guaranteed
to converge as fast or faster (viewed under a particular metric) than
any sequence acceleration scheme, based on over-relaxation, momentum,
or otherwise. Hence, at a minimum, we may use GMRES to numerically
bound the amount of acceleration available using ADMM or its variants.
If convergence is sufficiently rapid, then the accelerated method
(described as Algorithm~\ref{alg:ADMM-GMRES} in Section~\ref{sec:ADMM-for-Quadratic},
which we name ADMM-GMRES) may be also be used as a solution algorithm
for (\ref{eq:ecqp}). 

\subsection{Convergence in $O(\kappa^{1/4})$ iterations}

\begin{figure}
\hfill{}\includegraphics[width=0.5\columnwidth]{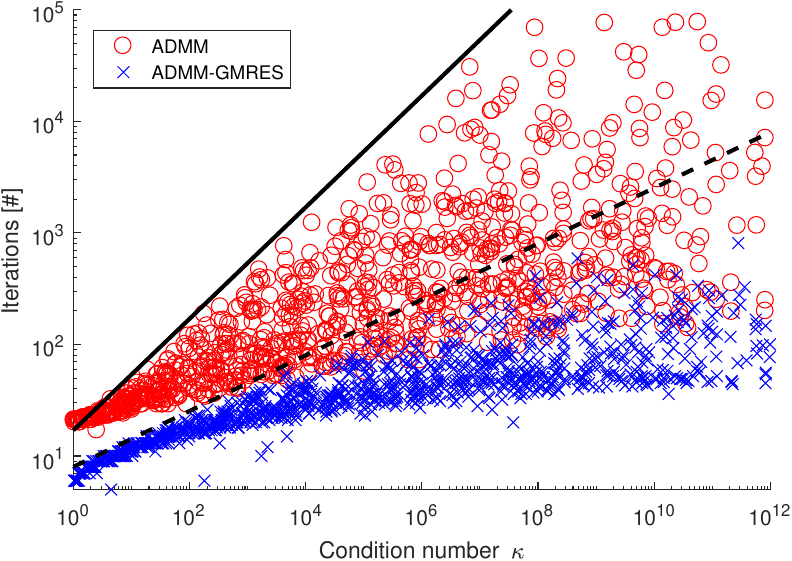}\hfill{}

\caption{\label{fig:first_comparison}Given the same 1000 randomly-generated
problems and using the same parameter choice, ADMM (circles) converges
in $O(\sqrt{\kappa})$ iterations while GMRES-accelerated ADMM (crosses)
converges in $O(\kappa^{1/4})$ iterations, where $\kappa$ is the
condition number.}
\end{figure}
Under the strong convexity assumptions described in~\cite{deng2012global},
ADMM converges at a linear rate, with a convergence rate dependent
on the parameter choice $\beta$ and the problem condition number
$\kappa$. A number of previous authors have derived the optimal fixed
parameter choice $\beta^{\star}$ that allows ADMM to converge to
an $\epsilon$-accurate solution in $O(\sqrt{\kappa}\log\epsilon^{-1})$
iterations~\cite{ghadimi2015optimal,giselsson2014diagonal,nishihara2015general}. 

Numerically accelerating this sequence using GMRES, we were surprised
to find that an $\epsilon$-accurate solution is consistently computed
in $O(\kappa^{1/4}\log\epsilon^{-1})$ iterations, for a square-root
factor reduction in the number of iterations. Fig.~\ref{fig:first_comparison}
makes this comparison for 1000 instances of (\ref{eq:ecqp}) randomly
generated using Algorithm~\ref{algr:log-normal} in Section~\ref{subsec:explain_empirical}
below. The same acceleration was also reliably observed for a wide
collection of interior-point Newton subproblems in Section~\ref{sec:Newton_dir}. 

The order-of-magnitude acceleration is surprising because the $O(\sqrt{\kappa}\log\epsilon^{-1})$
iteration bound is sharp, when considered over all $\kappa$-conditioned
(\ref{eq:ecqp}). Indeed, we give an explicit example of a ``hard''
instance of (\ref{eq:ecqp}) in Section~\ref{sec:Worst-Case-Behavior},
and prove that ADMM-GMRES cannot converge for this example at a rate
faster than $(\sqrt{\kappa}-1)/(\sqrt{\kappa}+1)$ per iteration.
This lower complexity bound is reminiscent of the famous result by
Nemirovski and Yudin~\cite{nemirovskii1983problem}, which states
first-order methods cannot minimize every smooth, strongly convex
function with condition number $\kappa$ at a rate faster than $(\sqrt{\kappa}-1)/(\sqrt{\kappa}+1)$
per iteration. Indeed, the momentum-accelerated (i.e. ``fast'')
variants of the gradient method~\cite{nesterov2004introductory}
and the projected / proximal descent method~\cite{nesterov2004introductory,nesterov2005smooth,beck2009fast,becker2011nesta}
are said to be ``optimal'' precisely because they are able to converge
at this rate.

Nevertheless, these numerical results immediately confirm the possibility
for an order-of-magnitude acceleration for ADMM under practical settings,
using over-relaxation, momentum, or some other scheme. Indeed, one
approach is to explicitly extract a $k$-parameter over-relaxation
scheme from $k$ iterations of GMRES; see~\cite{nachtigal1992hybrid}
and the references therein. Where convergence is sufficiently rapid,
we may consider using GMRES-accelerated ADMM directly as a solution
algorithm for (\ref{eq:ecqp}). In this case, the $O(k^{2})$ time
and $O(k)$ memory requirements of GMRES should be balanced against
the cost of $k$ iterations of ADMM; see our discussion in Section~\ref{subsec:restarts}.

\subsection{Main results}

Our primary goal in this paper is to understand why GMRES so frequently
achieves an order-of-magnitude acceleration for ADMM. Convergence
analysis for Krylov subspace methods is typically formulated as a
classic problem in approximation theory\textemdash how well can one
approximate the eigenvalues of the matrix $I-G(\beta)$ as the roots
of an order-$k$ polynomial? In Section~\ref{sec:k4conv}, we show
that the $\kappa^{1/4}$ factor arises because the real eigenvalues
of the matrix $I-G(\beta^{\star})$ lie along (a rescaled version
of) the interval $[1,\kappa^{1/2}]$, and the Chebyshev polynomial
approximates this entire interval with convergence factor $(\kappa^{1/4}-1)/(\kappa^{1/4}+1)$.
This is precisely the same mechanism that grants conjugate gradients
a square-root factor acceleration over basic gradient descent; see~\cite[Ch.3]{greenbaum1997iterative}
and~\cite[Ch.6.11]{saad2003iterative}. 

Within the context of ADMM-GMRES, however, the square-root factor
acceleration hinges on two additional assumptions. First, the nonsymmetric
iteration matrix $G(\beta)$ should be \emph{close to normal}, in
order for its behavior to be accurately captured by its eigenvalues.
Furthermore, $G(\beta)$ also contains \emph{complex outliers}: eigenvalues
with nonzero imaginary parts, which prevent the Chebyshev polynomial
from being directly applicable. In Section~\ref{subsec:cond}, we
prove that the $\kappa^{1/4}$ factor will persist if the complex
outliers are \emph{better conditioned} than the real eigenvalues.
Our extensive numerical trials found both assumptions to be generic
properties of (\ref{eq:ecqp}), holding for almost all problem instances.
As a consequence, convergence in $O(\kappa^{1/4})$ is observed to
be generic property for the ADMM-GMRES solution of (\ref{eq:ecqp}).
Although it is difficult to rigorously justify the assumptions, we
make a number of heuristic arguments in support of them in Section~\ref{subsec:explain_empirical}
and Section~\ref{sec:normality}.

Much of our analysis is based on the observation that ADMM applied
to (\ref{eq:ecqp}) reduces to a preconditioner for an augmented Lagrangian
version of the KKT equations for (\ref{eq:ecqp}). Within this context,
ADMM-GMRES is only one of numerous preconditioned Krylov subspace
methods available; see~\cite{benzi2005numerical} for a comprehensive
survey. Indeed, it is closely related to block-triangular~\cite{bramble1988preconditioning,zulehner2002analysis,simoncini2004block}
and augmented Lagrangian / Uzawa preconditioners~\cite{fortin1983augmented,golub2003solving}.
In Section~\ref{sec:num_res}, we compare ADMM-GMRES against some
classic preconditioners for saddle-point problems, including the block-diagonal
preconditioner, variants of the constraint preconditioner, and the
Hermitian / Skew-Hermitian splitting preconditioner. Restricting each
preconditioner to the same operations used in ADMM, we find that each
preconditioner regularly attains its worst-case iteration bound of
$O(\sqrt{\kappa})$. By comparison, the ADMM preconditioner converges
in $O(\kappa^{1/4})$ iterations for every problem considered. The
order-of-magnitude reduction in the number of iterations was able
to offset both the higher relative cost of the preconditioner, as
well as the need to deploy an expensive Krylov method like GMRES.

It remains an open question whether the order-of-magnitude acceleration
would persist for non-quadratic objectives. If the update equations
are nonlinear, then ADMM is no longer a matrix iteration, and GMRES
acceleration is no longer optimal. Nevertheless, the update equations
may be locally well-approximated by their linearization, and a nonlinear
version of GMRES like a Newton-Krylov method~\cite{brown1994convergence}
or Anderson acceleration~\cite{walker2011anderson} may prove to
be useful.

\subsection{\label{subsec:applic_SDP}Applications to large-scale semidefinite
programming}

The original motivation of this paper is to solve semidefinite programs
(SDPs)
\begin{equation}
\underset{Y\succeq0}{\text{minimize }}\tr QY\text{ subject to }\tr B_{i}Y=p_{i}\text{ for all }i\in\{1,\ldots,m\},\tag{SDP}\label{eq:SDP}
\end{equation}
in which the problem data $Q,B_{1},\ldots,B_{m}$ are assumed to be
\emph{large-and-sparse}. Here, each matrix is $\theta\times\theta$
real symmetric, $Y\succeq0$ indicates that $Y$ is symmetric positive
semidefinite. Semidefinite programs are usually solved using an interior-point
method; the associated computation cost is dominated by the Newton
subproblem 
\begin{alignat}{2}
 & \text{minimize } & \frac{1}{2}\|W^{1/2}(X-\hat{X})W^{1/2}\|_{F}^{2}+p^{T}z\tag{NEWT}\label{eq:newt}\\
 & \text{subject to } & X+\sum_{i=1}^{m}z_{i}B_{i}=Q,\nonumber 
\end{alignat}
which must be solved at each interior-point iteration, using iteration-
and algorithm-specific $\theta\times\theta$ matrices $W$ and $\hat{X}$.
In practice, convergence to machine precision almost always occurs
within 30-50 iterations, so it is helpful to view the ``practical
complexity'' of (\ref{eq:SDP}) as a modest constant times the cost
of solving (\ref{eq:newt}).

General-purpose interior-point methods solve (\ref{eq:newt}) directly,
by performing Gaussian elimination on its linear KKT equations. An
important feature of interior-point methods for SDPs is that these
KKT equations are typically dense, despite any sparsity in the data
matrices $Q,B_{1},\ldots,B_{m}$. As a consequence, the direct approach
solves (\ref{eq:newt}) in approximately the same amount of time and
memory for highly sparse instances as it does for fully dense ones.
In the case of larger problems with $\theta$ and $m$ on the order
of thousands, even fitting the KKT equations into memory becomes very
difficult. 

Alternatively, we can solve (\ref{eq:newt}) using an iterative algorithm,
like conjugate gradients (CG)~\cite{karmarkar1984new,vandenberghe1995primal,vandenberghe1996semidefinite,toh2002solving}
or ADMM~\cite{pakazad2014distributed}, as a set of inner iterations
within an outer interior-point method. These iterative algorithms
have low per-iteration and memory costs that can be further reduced
by exploiting sparsity. On the other hand, the resulting interior-point
method suffer from ``diminishing returns'' typical of first-order
methods: more and more inner iterations are required for each additional
outer iteration. Formally, (\ref{eq:newt}) has condition number $\kappa=\Theta(1/\epsilon^{2})$
at an $\epsilon$-accurate interior-point iterate, so taking the standard
$O(\sqrt{\kappa}\log\epsilon^{-1})$ iteration bound for CG and ADMM
and the $O(\log\epsilon^{-1})$ iteration bound for the interior-point
method, we find that the combined algorithm converges to an $\epsilon$-accurate
solution of (\ref{eq:SDP}) in $O(\epsilon^{-1}(\log\epsilon^{-1})^{2})$
inner iterations. Up to a logarithmic factor, this is the same $O(1/\epsilon)$
iteration bound as obtained by applying a standard first-order method
directly to (\ref{eq:SDP}); see~\cite{nesterov2007smoothing,wen2010alternating}.
There seems to be little justification for the added complexities
of the interior-point method.

Now, suppose that GMRES-accelerated ADMM is able to solve (\ref{eq:newt})
to $\epsilon$-accuracy in $O(\kappa^{1/4}\log\epsilon^{-1})$ iterations.
Then, embedding this accelerated ADMM within an outer interior-point
method yields a combined algorithm that converges to an $\epsilon$-accurate
solution of (\ref{eq:SDP}) in $O(\epsilon^{-1/2}(\log\epsilon^{-1})^{2})$
inner iterations. Up to a logarithmic factor, this is the same $O(1/\sqrt{\epsilon})$
iteration bound as ``fast'' first-order algorithms like Nesterov's
accelerated gradient method~\cite{nesterov1983method}, FISTA~\cite{beck2009fast},
and NESTA~\cite{becker2011nesta}. The order-of-magnitude acceleration
previously described for the smooth, strongly convex quadratic problem
(\ref{eq:ecqp}) has been extended to the nonsmooth, weakly-convex,
non-quadratic problem (\ref{eq:SDP}) through the use of an outer
interior-point loop.

Section~\ref{sec:Newton_dir} tests this idea by embedding ADMM-GMRES
within SeDuMi~\cite{sturm1999using}, a popular open-source interior-point
method written in MATLAB, and using it to solve problems from SDPLIB~\cite{borchers1999SDPLIB}
and the Seventh DIMACS Implementation Challenge~\cite{pataki2002dimacs}.
In our results, ADMM-GMRES does indeed solve every instance of (\ref{eq:newt})
in $O(\kappa^{1/4}\log\epsilon^{-1})$ iterations, though the costs
associated with GMRES become too high past $\approx30$ iterations.
Restarting ADMM-GMRES every 25 iterations yielded a ``fast'' first-order
method that converged in $O(1/\sqrt{\epsilon})$ iterations for 8
out of the 10 DIMACS problems considered.

\subsection{Related work}

\subsubsection*{Accelerating ADMM}

When applied to a weakly convex, possibly nonsmooth problem, ADMM
has a sublinear error rate, converging to an $\epsilon$-accurate
solution within $O(1/\epsilon)$ iterations~\cite{he20121}. Most
existing work on accelerating ADMM~\cite{goldstein2014fast,ouyang2015accelerated,kadkhodaie2015accelerated}
aim to improve the iteration bound to $O(1/\sqrt{\epsilon})$ by assuming
strong convexity and either applying the momentum ideas of Nesterov~\cite{nesterov1983method,nesterov2005smooth}
and Beck and Teboulle~\cite{beck2009fast}, or by adopting a ``fast''
version of the Douglas\textendash Rachford algorithm~\cite{esser2010general,chambolle2011first,patrinos2014douglas},
noting that ADMM is just the Douglas\textendash Rachford algorithm
applied to the dual problem~\cite{gabay1983applications}. 

For smooth and strongly convex problems, ADMM converges at a linear
rate, producing an $\epsilon$-accurate solution in $O(\sqrt{\kappa}\log\epsilon^{-1})$
iterations~\cite{deng2012global}. Within this regime, the most widely
used acceleration technique is simple over-relaxation, which reliably
reduces total iteration count by a small constant~\cite{wen2010alternating,giselsson2014diagonal,ghadimi2015optimal,nishihara2015general}. 

\subsubsection*{ADMM for SDPs}

Semidefinite programs are nonsmooth and weakly convex by construction.
While ADMM (or an accelerated variant) can be directly applied to
solve (\ref{eq:SDP})~\cite{wen2010alternating,odonoghue2016conic},
the resulting iterations converge at a sublinear rate in the worst-case,
requiring up to $O(1/\epsilon)$ iterations to produce an $\epsilon$-accurate
iterate. In practice, ADMM often performs much better than its worst-case,
producing 6-7 accurate digits in just 200-500 iterations over a wide
array of test problems~\cite{wen2010alternating}. Nevertheless,
the algorithm does regularly attain its worst-case bound; several
thousand iterations may be required to produce just 2-3 accurate digits~\cite{madani2015admm,kalbat2015fast}. 

In this paper, we apply ADMM to the inner Newton subproblem (\ref{eq:newt})
within an outer interior-point solution of (\ref{eq:SDP}). Using
GMRES to accelerate ADMM, the number of iterations to $\epsilon$-accuracy
is consistently reduced to $O(1/\sqrt{\epsilon})$ iterations, for
a square-root factor improvement over directly applying ADMM to (\ref{eq:SDP}).
The $O(1/\epsilon)$ worst-case iteration bound remains unchanged,
though our experimental results suggest that the worst-case is difficult
to attain.

The per-iteration complexity of ADMM applied to either (\ref{eq:SDP})
or (\ref{eq:newt}) is at least cubic $\Omega(\theta^{3})$ time and
quadratic $\Omega(\theta^{2})$ space, due to the explicit storage
of, and algebraic manipulations with, a fully-dense $\theta\times\theta$
matrix variable $Y$. These complexity figures limit the algorithm
to medium-sized SDPs, with $\theta$ on the order of a few thousand.
However, if the data matrices $Q,B_{1},\ldots,B_{m}$ are sparse with
a \emph{chordal} aggregate sparsity pattern\footnote{An equivalent condition is to say that every $\theta\times\theta$
dual variable $S=Q-\sum_{i}z_{i}B_{i}\succ0$ can symmetrically permuted
and factored using Cholesky factorization in linear $O(\theta)$ time
and space.}, then it is possible to represent the fully-dense $\theta\times\theta$
matrix variable $Y$ using $\theta$ fully-dense $O(1)\times O(1)$
matrix variables $Y_{1},\ldots,Y_{\theta}$ via a \emph{positive semidefinite
matrix completion} argument; see~\cite{fukuda2001exploiting,nakata2003exploiting}
and also~\cite{vandenberghe2015chordal} for an extensive survey.
This technique, named \emph{clique tree conversion} or \emph{chordal
conversion}, reduces the per-iteration complexity of ADMM to linear
$O(\theta)$ time and space, thereby making it suitable for large-scale
SDPs with $\theta$ on the order of tens of thousands~\cite{madani2015admm,zheng2016fast}.
Unfortunately, many large-scale SDPs do not satisfy the chordal sparsity
property; for example, it is not satisfied by the large-scale test
problems in Section~\ref{sec:Newton_dir}.

\subsubsection*{Iterative solution of the Newton subproblem}

The idea of applying a preconditioned iterative solver to the interior-point
Newton subproblem dates back to the original Karmarkar interior-point
method~\cite{karmarkar1984new}, and remains the standard approach
large-scale $\ell_{1}$-regularized regression~\cite{candes2005l1magic}
and network flow problems~\cite[Chapter 4]{mitchell1998interior}.
Preconditioners based on sparse matrix ideas, such as incomplete Cholesky
factorizations~\cite{lin1999incomplete} and constraint preconditioning~\cite{bergamaschi2004preconditioning},
work very well for linear and quadratic programs, but have been less
successful for SDP, primarily due to the density of their associated
Newton subproblem; see the discussions in~\cite{toh2002solving,toh2004solving}
for further details. 

Preconditioners based on the spectral properties of the Newton subproblem
(\ref{eq:newt}), such as the projection preconditioner~\cite{toh2002solving,toh2004solving},
and the partial Cholesky preconditioner~\cite{gondzio2012matrix,bellavia2013matrix},
work very well for SDPs. Their key insight is to note that the number
of \emph{active inequality constraints} in (\ref{eq:SDP}) determines
the number of ill-conditioned dimensions in its Newton subproblem
(\ref{eq:newt}). If only a few constraints are active, or equivalently,
if the solution $Y^{\star}$ to (\ref{eq:SDP}) is \emph{low-rank},
then the number of ``bad'' dimensions in (\ref{eq:newt}) is small,
and can be corrected using a low-rank perturbation. The resulting
preconditioned problem has a bounded condition number $\kappa=O(1)$
at every $\epsilon$-accurate interior-point iterate. 

This paper suggests ADMM as a preconditioner for the Newton subproblem
(\ref{eq:newt}). The resulting preconditioned problem has condition
number $\kappa=O(1/\sqrt{\epsilon})$ at an $\epsilon$-accurate interior-point
iterate under mild technical assumptions. This is a weaker guarantee
than the $O(1)$ figure for the spectral preconditioners described
above. However, it remains applicable even when the solution $Y^{\star}$
to (\ref{eq:SDP}) is not low-rank, and the number of ``bad'' dimensions
in (\ref{eq:newt}) is not small. In comparison, the spectral preconditioners
are no longer efficient under this regime, and may have costs comparable
to that of directly solving the full problem. 

Given an arbitrary fully-dense $\theta\times\theta$ algorithm matrix
$W$, the per-iteration complexity of an iterative solver applied
to (\ref{eq:newt}) is at least cubic $\Omega(\theta^{3})$ time and
quadratic $\Omega(\theta^{2})$ space in general. However, if a \emph{dual-scaling}
interior-point method is used (over a primal-dual interior-point method),
then $W$ inherits an additional structure: its inverse matrix $W^{-1}$
is \emph{sparse,} with the same aggregate sparsity pattern as the
problem data $Q,B_{1},\ldots,B_{m}$~\cite{benson2000solving,vandenberghe2015chordal,Bellavia2018}.
Moreover, if the data matrices $Q,B_{1},\ldots,B_{m}$ are sparse
with a \emph{chordal} aggregate sparsity pattern, then $W^{-1}$ can
be efficiently factored in linear $O(\theta)$ time and space~\cite{dahl2008covariance,andersen2013logarithmic,zhang2018large}.
Implicitly representing $W$ via its sparse inverse $W^{-1}$ (possibly
in factored form) reduces the per-iteration cost of iterative solvers
to linear $O(\theta)$ time and space, thereby making them available
to large-scale problems with $\theta$ on the order of tens to hundreds
of thousands~\cite{Bellavia2018,zhang2018large}. 

\subsection{Notation}

Throughout this paper, we will frequently refer to the condition number
$\kappa$, the gradient Lipschitz constant $L$, the strong convexity
parameter $\mu$, which are defined in terms of a rescaled objective
matrix $\tilde{D}$ in (\ref{eq:mu_L_kap}). The positive integer
$k$ is reserved for the iteration index. Accuracy is always measured
with respect to the metric $\|\cdot\|_{M}$ defined in (\ref{eq:gmres_cost}).

The positive integers $n,$ $m,$ $\ell$ refer to the dimensions
of the primal-dual variables $x\in\R^{n},$ $z\in\R^{m},$ and $y\in\R^{\ell}$
respectively. Their sum $N=n+m+\ell$ denotes the dimension of the
concatenated variable $u=[x;z;y]\in\R^{N}$. In discussing interior-point
methods for semidefinite programs in Section~\ref{sec:Newton_dir},
the positive integer $\theta$ denotes the order of the semidefinite
cone.

Our notation is otherwise standard, with the following exceptions.
We use $\Lambda\{A\}$ to refer to the eigenvalues of $A$. The set
$\P_{k}$ denotes the space of order-$k$ polynomials. Given a polynomial
$p(z)$, we denote its maximum modulus over a compact subset of the
complex plane $\mathcal{S}\subset\C$ as $\|p(z)\|_{\mathcal{S}}=\max_{z\in\mathcal{S}}|p(z)|$. 

\section{ADMM for quadratic problems\label{sec:ADMM-for-Quadratic}}

Beginning with a choice of the quadratic-penalty / step-size parameter
$\beta>0$ and initial points $u^{0}=[x^{0};z^{0};y^{0}]$, ADMM applied
to (\ref{eq:ecqp}) generates iterates \begin{subequations}\label{eq:admm_iters}
\begin{align}
x^{k+1} & =\arg\min_{x}\frac{1}{2}x^{T}Dx+c^{T}x+\frac{\beta}{2}\|Ax+Bz^{k}-c+y^{k}\|^{2},\label{eq:x_update}\\
z^{k+1} & =\arg\min_{z}p^{T}z+\frac{\beta}{2}\|Ax^{k+1}+Bz-c+y^{k}\|^{2},\label{eq:z_update}\\
y^{k+1} & =y^{k}+(Ax^{k+1}+Bz^{k+1}-c).\label{eq:y_update}
\end{align}
\end{subequations}Some basic algebraic manipulations reveal these
to be a matrix-splitting iteration for the (rescaled) augmented Lagrangian
KKT system (see e.g.~\cite{fortin1983augmented,golub2003solving})
\begin{equation}
\underbrace{\begin{bmatrix}\beta^{-1}D+A^{T}A & A^{T}B & A^{T}\\
B^{T}A & B^{T}B & B^{T}\\
A & B & 0
\end{bmatrix}}_{H(\beta)}\underbrace{\begin{bmatrix}x\\
z\\
y
\end{bmatrix}}_{u}=\underbrace{\begin{bmatrix}A^{T}d-\beta^{-1}c\\
B^{T}d-\beta^{-1}p\\
d
\end{bmatrix}}_{v(\beta)},\label{eq:augKKT}
\end{equation}
using a Gauss-Seidel\textendash like splitting
\begin{equation}
H(\beta)=\underbrace{\begin{bmatrix}\beta^{-1}D+A^{T}A & 0 & 0\\
B^{T}A & B^{T}B & 0\\
A & B & -I
\end{bmatrix}}_{M(\beta)}-\underbrace{\begin{bmatrix}0 & -A^{T}B & -A^{T}\\
0 & 0 & -B^{T}\\
0 & 0 & -I
\end{bmatrix}}_{N(\beta)}.\label{eq:augKKTsplit}
\end{equation}
Indeed, fixing the parameter $\beta$, (\ref{eq:admm_iters}) is precisely
the linear fixed-point iterations
\begin{equation}
u^{k+1}=\underbrace{M(\beta)^{-1}N(\beta)}_{G(\beta)}u^{k}+\underbrace{M(\beta)^{-1}v(\beta)}_{b(\beta)}.\label{eq:fixed_point_iter}
\end{equation}

ADMM assumes that \emph{black-box oracles} are available for evaluating
matrix-vector products with $(\beta^{-1}D+A^{T}A)^{-1}$, $(B^{T}B)^{-1}$,
$A$, $B$, $A^{T}$, and $B^{T}$, without necessarily requiring
explicit access to these matrices. The method can be effective only
if the oracles are not too expensive to set-up and call. In practice,
efficient \emph{matrix-implicit} implementations frequently arise
through problem-specific structure. For example, when the matrices
$A$, $B$, and $D$ are large-and-sparse, the matrices $\beta^{-1}D+A^{T}A$
and $B^{T}B$ often admit sparse Cholesky factorizations. After precomputing
the factorizations in linear-time, each matrix-vector product with
$(\beta^{-1}D+A^{T}A)^{-1}$ or $(B^{T}B)^{-1}$ may be evaluated
in linear-time, as the solution of two sparse, triangular linear systems.
Alternative structures can also be exploited, including the Kronecker
factorization $D=W\otimes W$ for the SDP Newton subproblem in Section~\ref{subsec:Implementation},
as well as Toeplitz / Hankel / Circulant matrix structures. For further
discussion on these implementation issues, we direct the interested
reader to~\cite[Sec.4.2]{boyd2011distributed}. 

\subsection{Sequence acceleration}

Let us view ADMM as a black box $T_{\beta}(\cdot)$ that maps a given
test point $u$ to its image $T_{\beta}(u)$, in order to rewrite
the basic ADMM method as the iterated map 
\begin{equation}
u^{k+1}=T_{\beta}(u^{k}).\label{eq:iter_map}
\end{equation}
Under strong convexity assumptions, (\ref{eq:iter_map}) converges
linearly to a unique fixed-point~\cite{deng2012global}.
\begin{assume}
\label{ass:regularity}The matrix $D$ is symmetric positive definite,
the matrix $A$ has full row-rank (i.e. $AA^{T}$ is invertible),
and the matrix $B$ has full column-rank (i.e. $B^{T}B$ is invertible). 
\end{assume}
\begin{rem}
Given that $A\in\R^{\ell\times n}$ and $B\in\R^{\ell\times m}$,
Assumption~\ref{ass:regularity} can only be satisfied if $n\ge\ell\ge m$.
(Recall that $x\in\R^{n}$, $y\in\R^{\ell}$, $z\in\R^{m}$.)
\end{rem}
The convergence rate of (\ref{eq:iter_map}) depends on parameter
choice $\beta$ and the problem condition number $\kappa$, defined
as the ratio of a gradient Lipschitz parameter $L$ and a strong convexity
parameter $\mu$,
\begin{equation}
\tilde{D}\triangleq(AD^{-1}A^{T})^{-1},\quad\mu\triangleq\lambda_{\min}(\tilde{D}),\quad L\triangleq\lambda_{\max}(\tilde{D}),\quad\kappa\triangleq L/\mu.\label{eq:mu_L_kap}
\end{equation}
In particular, the parameter choice $\beta=\sqrt{\mu L}$ allows an
$\epsilon$-accurate solution to be computed in no more than $O(\sqrt{\kappa}\log\epsilon^{-1})$
iterations~\cite{ghadimi2015optimal,giselsson2014diagonal,nishihara2015general}. 

Sequence acceleration seeks to find an $\epsilon$-accurate approximation
of the fixed point $u^{\star}=T_{\beta}(u^{\star})$ while making
as few calls to the black-box oracle as possible. Two popular approaches
are over-relaxation, which linearly extrapolates the current step,
\begin{equation}
u^{k+1}=u^{k}+\omega_{k}[T_{\beta}(u^{k})-u^{k}],\label{eq:over-relax}
\end{equation}
and momentum, which linearly extrapolates the previous step, 
\begin{equation}
\tilde{u}=u^{k}+\theta_{k}(u^{k}-u^{k-1}),\qquad u^{k+1}=T_{\beta}(\tilde{u}).\label{eq:momentum}
\end{equation}
In each case, the $(k+1)$-th iterate is selected from the plane that
crosses the initial point and the $k$ images
\begin{equation}
u^{k+1}\in\mathrm{Aff}\{u^{0},T_{\beta}(u^{0}),T_{\beta}(u^{1}),\ldots,T_{\beta}(u^{k})\}.\label{eq:seach_space}
\end{equation}
The affine hull linearly extrapolates the information collected from
$k$ evaluations of the black-box oracle; the parameters $\theta_{1},\ldots,\theta_{k}$
and $\omega_{1},\ldots,\omega_{k}$ may be viewed as its coordinates.
By carefully tuning these parameters, it is possible to select \emph{better}
candidates than the default choice produced by the iterated map (\ref{eq:iter_map}),
thereby yielding a sequence $u^{0},u^{1},\ldots,u^{k}$ with an accelerated
convergence rate.

\subsection{The optimality of GMRES}

Let the black-box oracle $T_{\beta}(\cdot)$ be affine, meaning that
there exists some matrix $G(\beta)$ and vector $b(\beta)$ such that
$T_{\beta}(u)=G(\beta)u+b(\beta)$. Furthermore, let us measure the
accuracy of a test point $u$ using an implicit but easily computable
Euclidean metric\footnote{Indeed, $\kappa_{M}^{-1}\le\|u-u^{\star}\|_{M}/\|u-u^{\star}\|\le\kappa_{M}$,
where $\kappa_{M}=\mathrm{cond}(I-G(\beta))$ is finite because the
ADMM iterations converge to a unique fixed-point.}
\begin{equation}
\|u-u^{\star}\|_{M}\triangleq\|u-[G(\beta)u+b(\beta)]\|=\|[I-G(\beta)](u-u^{\star})\|.\label{eq:gmres_cost}
\end{equation}
Then, the affine search space (\ref{eq:seach_space}) reduces to a
Krylov subspace

\begin{equation}
u^{k+1}\in u^{0}+\mathrm{span}\{r,Gr,\ldots,G^{k}r\}\label{eq:krylov_def}
\end{equation}
where $G\equiv G(\beta)$ and $r=u^{0}-[G(\beta)u^{0}+b(\beta)]$,
and the problem of selecting the \emph{best} candidate from the Krylov
subspace (\ref{eq:krylov_def}) is numerically solved by GMRES. We
defer to standard texts for its implementation details, e.g.~\cite[Alg. 6.9]{saad2003iterative}
or \cite[Alg. 6.10]{saad2003iterative}, and only note that the algorithm
can be viewed as a ``black-box'' that solves the following projected
least-squares problem at the $k$-th iteration:
\begin{align}
\GMRES_{k}(A,b) & =\arg\min\left\{ \|b-Ax\|_{2}:x\in\mathrm{span}\{b,Ab,\ldots,A^{k-1}b\}\right\} ,\label{eq:gmres_min}
\end{align}
in $\Theta(k^{2}N)$ flops, $\Theta(kN)$ memory, and $k$ matrix-vector
products with $A$. Consider the following algorithm.
\begin{algor}[ADMM-GMRES]
\label{alg:ADMM-GMRES}Input: The update operator $T_{\beta}$ that
implements (\ref{eq:admm_iters}) as $u^{k+1}=T_{\beta}(u^{k})$;
Initial point $u^{0}$; Number of iterations $k$.
\begin{enumerate}
\item Precompute and store $T_{\beta}(u^{0})=G(\beta)u^{0}+b(\beta)$ and
$r=u^{0}-T_{\beta}(u^{0})$;
\item Call $\Delta u=\GMRES_{k}(I-G(\beta),\,r)$, while evaluating each
matrix-vector product as $[I-G(\beta)]h=h-[T_{\beta}(u_{0}+h)-T_{\beta}(u^{0})]$;
\item Output $u^{k}=u^{0}-\Delta u$.
\end{enumerate}
\end{algor}
The optimality of GMRES in (\ref{eq:gmres_min}) guarantees $\|u^{k}-u^{\star}\|_{M}$
to be smaller than that of regular ADMM, as well as any accelerated
variant that selects its $k$-th iterate $u^{k}$ from (\ref{eq:seach_space}).
In other words, no linearly\emph{ }extrapolating sequence acceleration
scheme, based on momentum, over-relaxation, or otherwise, can converge
faster than GMRES when viewed under this metric. 

\subsection{ADMM as a preconditioner}

The fixed-point equation associated with the ADMM iterations (\ref{eq:fixed_point_iter})
\begin{equation}
u^{\star}-G(\beta)u^{\star}=b(\beta),\label{eq:admm_fixed}
\end{equation}
is a linear system of equations when $\beta$ is held fixed, which
can be solved using GMRES. In the previous subsection, we named the
resulting method ADMM-GMRES, and viewed it as an optimally accelerated
version of ADMM. Equivalently, (\ref{eq:admm_fixed}) is also the
\emph{left-preconditioned} system of equations
\begin{equation}
M^{-1}(\beta)[H(\beta)u^{\star}-v(\beta)]=0\qquad\Leftrightarrow\qquad\text{(\ref{eq:admm_fixed})},\label{eq:left-prec}
\end{equation}
where $H$ and $v$ comprise the augmented Lagrangian KKT system in
(\ref{eq:augKKT}), and $M$ is the preconditioner matrix defined
in (\ref{eq:augKKTsplit}). Note that the ADMM iteration matrix satisfies
$G(\beta)=I-M^{-1}(\beta)H(\beta)$ by definition. In turn, ADMM-GMRES
is equivalent to a preconditioned GMRES solution of the augmented
KKT system $H(\beta)u=v(\beta)$ using $M(\beta)$ as the preconditioner. 

\subsection{\label{subsec:restarts}Reducing the cost of GMRES}

A significant shortcoming of GMRES is its need to store and manipulate
an $N\times k$ dense matrix at the $k$-th iteration. Its $\Theta(k^{2}N)$
time and $\Theta(kN)$ memory requirements become unsustainable once
$k$ grows large. It was proved by Faber and Manteuffel that these
complexity figures cannot be substantially reduced without destroying
the optimal property of GMRES~\cite{faber1984necessary}. Hence,
if many GMRES iterations are desired, then we must give up on its
optimality and adopt a limited-memory heuristic.

One limited-memory approach is to periodically restart GMRES: after
$p$ iterations, the final iterate is extracted, and used as the initial
point for a new set of $p$ iterations. An issue with the resulting
algorithm, known as GMRES($p$), is that it can stall, meaning that
it may fail to make further progress after a certain number of iterations.
Alternatively, a Krylov method based on Lanczos biorthogonalization
may be used, including BiCG, QMR and their transpose-free variants;
see~\cite{saad2003iterative}. These are entirely heuristic, but
tend to work well when GMRES converges quickly, and do not stall as
easily as restarted GMRES. QMR is often preferred over BiCG for being
more stable and for having a convergence analysis somewhat related
to GMRES. 

Limited-memory variants of ADMM-GMRES can be developed by viewing
it as a preconditioned GMRES solution of the augmented KKT system
$H(\beta)u=v(\beta)$ in (\ref{eq:augKKT}) using the matrix $M(\beta)$
in (\ref{eq:augKKTsplit}) as the preconditioner. For example, we
obtain ADMM-GMRES($p$) and ADMM-QMR by using GMRES($p$) and QMR
in place of regular GMRES, respectively. Fig.~\ref{fig:restarts}
compares ADMM with ADMM-GMRES and these two limited-memory variants,
as applied to two problems of comparable conditioning. In the first,
``easy'' example, all limited-memory variants of ADMM-GMRES outperform
basic ADMM, with ADMM-QMR converging almost as rapidly as ADMM-GMRES,
despite requring far less time and memory. But in the second, ``difficult''
example, all of these limited-memory variants stall, or get close
to stalling. Only ADMM-GMRES is able to converge at the desired $(\kappa^{1/4}-1)/(\kappa^{1/4}+1)$
rate.
\begin{figure}
\hfill{}\subfloat[]{\includegraphics[width=0.45\columnwidth]{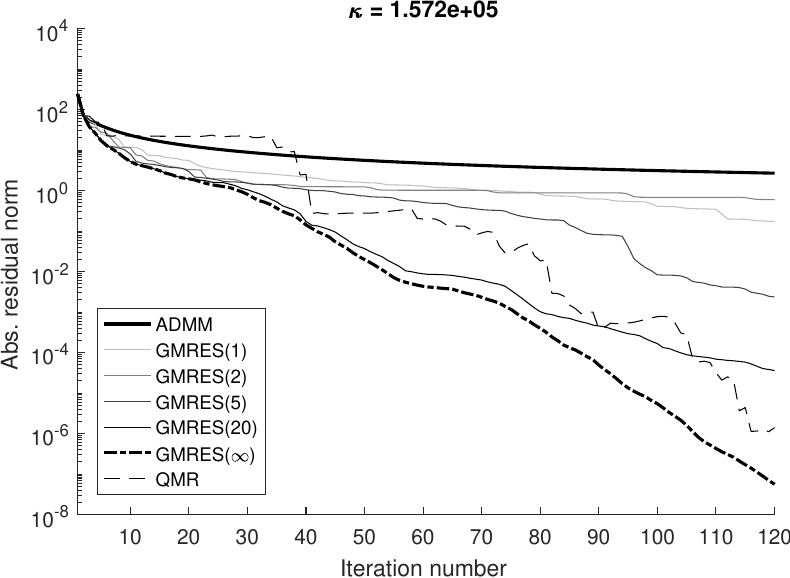}

}\hfill{}\subfloat[]{\includegraphics[width=0.45\columnwidth]{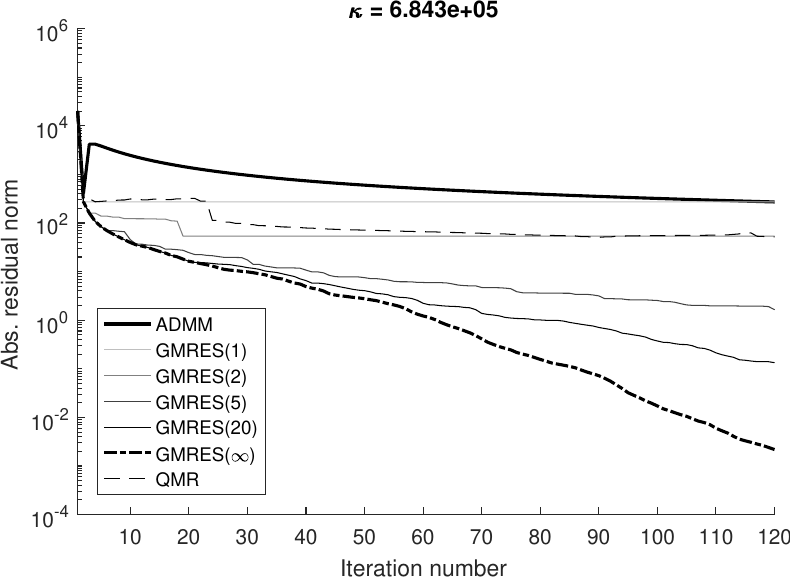}

}\hfill{}

\caption{\label{fig:restarts}Comparison of ADMM with ADMM-GMRES and its limited-memory
variants: (a) an ``easy'' example; (b) a ``difficult'' example. }
\end{figure}

\section{GMRES convergence analysis}

Krylov subspace methods like GMRES are closely associated with the
idea of approximating a matrix inverse using a low-order matrix polynomial.
To explain, consider substituting the constraints of the GMRES least-squares
problem (\ref{eq:gmres_min}) into its objective, to yield
\begin{equation}
\|b-Ax^{k}\|=\min_{\alpha_{1},\ldots,\alpha_{k}}\left\Vert b-A\left(\sum_{j=1}^{k}\alpha_{j}A^{j-1}b\right)\right\Vert =\min_{p\in\P_{k-1}}\|[I-Ap(A)]b\|.\label{eq:gmres_poly}
\end{equation}
If there exists an accurate low-order matrix polynomial approximation
$p(A)$ for the matrix inverse $A^{-1}$, then $Ap(A)\approx I$ and
$\|[I-Ap(A)]b\|\approx0$. GMRES must converge rapidly, since it optimizes
over all polynomials in (\ref{eq:gmres_poly}). 

Equivalently, we may solve the\emph{ residual minimization} problem
\begin{equation}
\min_{p\in\P_{k-1}}\left\{ \|q(A)b\|:q(z)=1-zp(z)\right\} =\min_{\begin{subarray}{c}
q\in\P_{k}\\
q(0)=1
\end{subarray}}\|q(A)b\|,\label{eq:gmres_poly2}
\end{equation}
and recover $p(A)$ via $p(z)=z^{-1}(q(z)-1)$. Equation (\ref{eq:gmres_poly2})
is the standard tool for analyzing the convergence of optimal Krylov
subspace methods like GMRES; see~\cite{greenbaum1997iterative,driscoll1998potential,saad2003iterative}.
For each $k\in\{1,2,\ldots\}$, the typical proof constructs a heuristic
polynomial $h_{k}(z)\in\P_{k}$ satisfying $h_{k}(0)=1$, and demonstrates
that the induced 2-norm of the matrix polynomial $h_{k}(A)$ converges
geometrically, as in $\|h_{k}(A)\|\le\alpha\varrho^{k}$. Then, since
GMRES optimizes over all polynomials in (\ref{eq:gmres_poly2}), it
must converge at least as quickly as this particular choice of polynomial:
\begin{equation}
\min_{\begin{subarray}{c}
q\in\P_{k}\\
q(0)=1
\end{subarray}}\|q(A)b\|\le\|h_{k}(A)b\|\le\|b\|\alpha\varrho^{k}.
\end{equation}
Hence, we conclude that GMRES converges at the asymptotic rate of
$\varrho$, requiring no more than $(1-\varrho)^{-1}\log(\alpha\epsilon^{-1})$
iterations to converge to an $\epsilon$-accurate iterate.

In this section, we will simplify the residual minimization problem
(\ref{eq:gmres_poly2}) associated with the matrix $A\gets[I-G(\beta)]$
to an easier one associated with the $\ell\times\ell$ nonsymmetric
matrix 
\begin{equation}
K(\beta)\triangleq\begin{bmatrix}Q^{T}\\
-P^{T}
\end{bmatrix}\left[(\beta^{-1}\tilde{D}+I)^{-1}-(\beta\tilde{D}^{-1}+I)^{-1}\right]\begin{bmatrix}Q & P\end{bmatrix},\label{eq:Kdef}
\end{equation}
in which $\tilde{D}=(AD^{-1}A^{T})^{-1}$ and $B=\left[\begin{smallmatrix}Q & P\end{smallmatrix}\right]\left[\begin{smallmatrix}R\\
0
\end{smallmatrix}\right]$ is the QR decomposition of $B$. (Recall that $\ell$ is the number
of rows in $A$, $B$, as well as the dimension of the Lagrange multiplier
$y$.) We begin by showing that only $\ell\le\frac{1}{2}N$ eigenvalues
of the $N\times N$ iteration matrix $G(\beta)$ are nonzero, and
these have values that coincide with the eigenvalues of $G_{22}(\beta)=\frac{1}{2}I+\frac{1}{2}K(\beta)$. 
\begin{lemma}
\label{lem:blkschur_ad}Let $B=\left[\begin{smallmatrix}Q & P\end{smallmatrix}\right]\left[\begin{smallmatrix}R\\
0
\end{smallmatrix}\right]$ be the QR decomposition of $B$. Then the ADMM iteration matrix $G(\beta)$
defined in (\ref{eq:fixed_point_iter}) has a block decomposition
\begin{equation}
G(\beta)=\left[\begin{array}{c|cc|c}
I_{n} & 0 & 0 & 0\\
0 & R^{-1} & 0 & 0\\
0 & 0 & P & Q
\end{array}\right]\left[\begin{array}{c|c|c}
0_{n} & G_{12}(\beta) & G_{13}(\beta)\\
\hline 0 & G_{22}(\beta) & G_{23}(\beta)\\
\hline 0 & 0 & 0_{m}
\end{array}\right]\left[\begin{array}{ccc}
I_{n} & 0 & 0\\
\hline 0 & R & 0\\
0 & 0 & P^{T}\\
\hline 0 & 0 & Q^{T}
\end{array}\right],\label{eq:AD_schur_blks}
\end{equation}
into blocks of size $n$, $\ell$, and $m$ respectively, in which
the constituent matrices are 
\begin{gather*}
G_{12}(\beta)=-\beta D^{-1}A^{T}(\beta^{-1}\tilde{D}+I)^{-1}\begin{bmatrix}Q & P\end{bmatrix},\\
G_{13}(\beta)=-\beta D^{-1}A^{T}(\beta^{-1}\tilde{D}+I)^{-1}Q,\\
G_{22}(\beta)=\frac{1}{2}I+\frac{1}{2}K(\beta),\qquad G_{23}(\beta)=\begin{bmatrix}Q^{T}\\
-P^{T}
\end{bmatrix}(\beta\tilde{D}^{-1}+I)^{-1}Q,
\end{gather*}
$K(\beta)$ is defined in (\ref{eq:Kdef}), and $\tilde{D}=(AD^{-1}A^{T})^{-1}$.
\end{lemma}
\begin{proof}
Follows from direct computation and applications of the Sherman\textendash Morrison\textendash Woodbury
identity.
\end{proof}
\begin{rem}
\label{rem:Knrm}The special structure of $K(\beta)$ allows its singular
values to be explicitly stated. In particular, 
\begin{equation}
\|K(\beta)\|=\frac{\gamma-1}{\gamma+1}\quad\text{where }\gamma=\max\left\{ \frac{L}{\beta},\frac{\beta}{\mu}\right\} ,\label{eq:Knrm}
\end{equation}
so the eigenvalues of $K(\beta)$ are contained within the disk on
the complex plane centered at at the origin, with radius $(\gamma-1)/(\gamma+1)$.
This is an important characterization of $K(\beta)$ that we will
use extensively in Sections~\ref{sec:Worst-Case-Behavior}~\&~\ref{sec:k4conv}.
\end{rem}
Furthermore, the block pattern in the Schur decomposition (\ref{eq:AD_schur_blks})
suggests that the Jordan block associated with each zero eigenvalue
of $G(\beta)$ is at most size $2\times2$. After two iterations,
ADMM becomes entirely dependent upon the inner iteration matrix $G_{22}(\beta)$.
\begin{cor}
\label{cor:poly_2norm}Given $\beta>0$ and any polynomial $p(\cdot)$,
we have
\[
\|p(G(\beta))\,G^{2}(\beta)\|\le c_{0}\left(1+\beta\|D^{-1}A^{T}\|\right)\|p(G_{22}(\beta))\|,
\]
where $c_{0}=2\max\{\sigma_{\max}(B),1/\sigma_{\min}(B),\sigma_{\max}(B)/\sigma_{\min}(B)\}$.
\end{cor}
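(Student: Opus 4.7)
I would exploit the block structure of Lemma~\ref{lem:blkschur_ad} by treating the outer factors as a similarity transform, then using the strict block-upper-triangularity of the middle factor to isolate $p(G_{22})$. Denote the middle factor in (\ref{eq:AD_schur_blks}) by $\mathcal{G}$ and the right outer factor by $\Pi$. The first step is to verify that the decomposition is a genuine similarity: a direct block multiplication---using $R R^{-1} = I_\ell$ and the orthogonality relations $QQ^T + PP^T = I_m$, $Q^TQ = I_\ell$, $P^TP = I_{m-\ell}$, $P^TQ = 0$---confirms in both orders that the left outer factor equals $\Pi^{-1}$. Hence $G(\beta) = \Pi^{-1}\mathcal{G}\Pi$, and for any polynomial $p$,
\[
p(G(\beta))\,G^2(\beta) \;=\; \Pi^{-1}\, p(\mathcal{G})\,\mathcal{G}^2\, \Pi .
\]

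Next I would compute $p(\mathcal{G})\mathcal{G}^2$ in closed form. Since $\mathcal{G}$ is block upper-triangular with only the $(2,2)$ diagonal block $G_{22}$ nonzero, a short induction yields
\[
\mathcal{G}^k \;=\; \begin{bmatrix} 0 & G_{12}G_{22}^{k-1} & G_{12}G_{22}^{k-2}G_{23} \\ 0 & G_{22}^k & G_{22}^{k-1}G_{23} \\ 0 & 0 & 0 \end{bmatrix} \qquad (k\ge 2),
\]
and summing across the coefficients of $p$ produces the rank-$m$ factorization
\[
p(\mathcal{G})\,\mathcal{G}^2 \;=\; \underbrace{\begin{bmatrix} G_{12}\\ G_{22}\\ 0\end{bmatrix}}_{U}\; p(G_{22})\; \underbrace{\begin{bmatrix} 0 & G_{22} & G_{23}\end{bmatrix}}_{V},
\]
which cleanly isolates the ``hard'' factor $p(G_{22})$ between two problem-dependent constants $U$ and $V$.

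The remainder is routine norm bookkeeping. Remark~\ref{rem:Knrm} together with $G_{22}=\tfrac{1}{2}(I+K(\beta))$ gives $\|G_{22}\|\le 1$. The orthonormality of $Q$, $P$, and $[Q\,P]$ combined with the fact that $(\beta^{-1}\tilde D+I)^{-1}$ and $(\beta\tilde D^{-1}+I)^{-1}$ are contractions (their eigenvalues all lie in $(0,1)$) yield $\|G_{12}\|\le \beta\|D^{-1}A^T\|$ and $\|G_{23}\|\le 1$, and therefore $\|U\|\le 1+\beta\|D^{-1}A^T\|$ and $\|V\|\le 2$. For the outer factor, $\Pi^T\Pi$ is block-diagonal with two identity blocks and an $R^TR$ block, so the nonunit singular values of $\Pi$ are exactly $\sigma_i(B)$; a short case analysis on whether $\sigma_{\max}(B)$ and $\sigma_{\min}(B)$ exceed $1$ gives
\[
\|\Pi\|\,\|\Pi^{-1}\| \;\le\; \max\bigl\{\sigma_{\max}(B),\,1/\sigma_{\min}(B),\,\sigma_{\max}(B)/\sigma_{\min}(B)\bigr\} \;=\; c_0/2 .
\]
Multiplying these four bounds produces the stated constant.

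The main obstacle is the block bookkeeping: the two outer matrices carry \emph{different} block partitions ($n,\ell,m$ versus $n,\ell,m-\ell,\ell$), so care is required to pair up the orthogonality relations correctly both when confirming they are mutual inverses and when verifying the rank-$m$ factorization of $p(\mathcal{G})\mathcal{G}^2$. Everything else reduces to direct operator-norm estimates from the explicit expressions in Lemma~\ref{lem:blkschur_ad}.
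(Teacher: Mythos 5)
Your proof is correct and follows essentially the same route as the paper: exploit the similarity in Lemma~\ref{lem:blkschur_ad}, use the strictly block-upper-triangular structure of the middle factor to obtain the rank-$m$ factorization $p(\mathcal{G})\mathcal{G}^2 = U\,p(G_{22})\,V$, and then bound $\|U\|$, $\|V\|$, and the condition number of the outer similarity factor exactly as you do. If anything you are somewhat more careful than the published proof, which leaves the verification that the outer factors are mutual inverses implicit and contains a small typo ($G^j$ rather than $G^{j+2}$ on the left of the key identity).
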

\begin{proof}
Let us write $G\equiv G(\beta)$. For each matrix monomial, we substitute
Lemma~\ref{lem:blkschur_ad} and note that the following holds for
all $j\ge0$
\begin{align*}
G^{j}=U\left[\begin{array}{c|c|c}
0 & G_{12} & G_{13}\\
\hline 0 & G_{22} & G_{23}\\
\hline 0 & 0 & 0
\end{array}\right]^{j+2}U^{-1} & =U\left[\begin{array}{c}
G_{12}\\
\hline G_{22}\\
\hline 0
\end{array}\right]G_{22}^{j}\left[\begin{array}{c|c|c}
0 & G_{22} & G_{23}\end{array}\right]U^{-1}.
\end{align*}
Repeating this argument for each monomial in $p(\cdot)$, we find
that
\begin{align*}
\|p(G)\,G^{2}\| & \le\kappa_{U}\|\begin{bmatrix}G_{12} & G_{22}\end{bmatrix}\|\|\begin{bmatrix}G_{22} & G_{23}\end{bmatrix}\|\|p(G_{22})\|,\\
 & \le\kappa_{U}(\beta\|D^{-1}A^{T}\|+1)(1+1)\|p(G_{22})\|,
\end{align*}
where $\kappa_{U}=\|U\|\|U^{-1}\|=\max\{\sigma_{\max}(B),1/\sigma_{\min}(B),\sigma_{\max}(B)/\sigma_{\min}(B)\}$. 
\end{proof}
Accordingly, the residual minimization problem (\ref{eq:gmres_poly2})
posed over $G(\beta)$ is reduced to a simpler problem over $K(\beta)$
after two iterations.
\begin{lemma}
\label{lem:admm_gmres}Fix $\beta>0$ and initial point $u^{0}$.
Let $u^{k}$ be the iterate generated at the $k$-th iteration of
ADMM-GMRES (Algorithm~\ref{alg:ADMM-GMRES}). Then the following
holds for all $k\ge2$ 
\begin{equation}
\frac{\|u^{k}-u^{\star}\|_{M}}{\|u^{0}-u^{\star}\|_{M}}\le(c_{0}+c_{1}\beta)\min_{\begin{subarray}{c}
p\in\P_{k-2}\\
p(1)=1
\end{subarray}}\|p(K(\beta))\|,\label{eq:minres_prob}
\end{equation}
where $K(\beta)$ is defined in (\ref{eq:Kdef}), $\|\cdot\|_{M}$
is defined in (\ref{eq:gmres_cost}), $\P_{k}$ is the space of order-$k$
polynomials, and $c_{0},c_{1}$ are constants.
\end{lemma}
\begin{proof}
Substituting $A\gets(I-G(\beta))$ and $b\gets r$ into (\ref{eq:gmres_poly2})
yields 
\begin{multline*}
\min_{\begin{subarray}{c}
p\in\P_{k}\\
p(0)=1
\end{subarray}}\|p(I-G(\beta))r\|\overset{\text{(a)}}{=}\min_{\begin{subarray}{c}
p\in\P_{k}\\
p(1)=1
\end{subarray}}\|p(G(\beta))r\|\\
\overset{\text{(b)}}{\le}\|r\|c(\beta)\min_{\begin{subarray}{c}
p\in\P_{k-2}\\
p(1)=1
\end{subarray}}\|p(G_{22}(\beta))\|\overset{\text{(c)}}{=}\|r\|c(\beta)\min_{\begin{subarray}{c}
p\in\P_{k-2}\\
p(1)=1
\end{subarray}}\|p(K(\beta))\|.
\end{multline*}
Equality (a) shifts the polynomials $p(1-z)\leftrightarrow p(z')$,
which also shifts the constraint point from $z=0$ to $z'=1$. Inequality
(b) takes the heuristic choice of $p(z)=z^{2}q(z)$ with an order
$k-2$ polynomial $q$, and substitutes Corollary~\ref{cor:poly_2norm}.
Equality (c) then shifts and scales the polynomials $p(z)\leftrightarrow p(\frac{1}{2}+\frac{1}{2}z')$,
keeping the constraint point $z=1$ at $z'=1$.
\end{proof}

\section{\label{sec:Worst-Case-Behavior}Worst-case behavior}

When applied to (\ref{eq:ecqp}), ADMM converges at the rate of $1-1/\sqrt{\kappa}$
with the parameter choice of $\beta=\sqrt{\mu L}$; a number of previous
authors have established versions of the following statement~\cite{deng2012global,giselsson2014diagonal,ghadimi2015optimal,nishihara2015general}.
\begin{proposition}
\label{prop:admm_spec_rad}The $k$-th iterate of ADMM with $k\ge2$
satisfies
\[
\frac{\|u^{k}-u^{\star}\|_{M}}{\|u^{0}-u^{\star}\|_{M}}\le(c_{0}+c_{1}\beta)\left(\frac{\gamma}{\gamma+1}\right)^{k-2}
\]
where $\gamma=\max\{L/\beta,\beta/\mu\}$, and $c_{0},c_{1}$ are
constants. The bound is sharp up to a multiplicative constant.
\end{proposition}
\begin{proof}
The residuals satisfy $r^{k}=G^{k}(\beta)r^{0}$, so $\|r^{k}\|/\|r^{0}\|\le\|G^{k}(\beta)\|$.
To establish the inequality, we substitute the bound $\|G_{22}^{k}\|\le(\frac{1}{2}+\frac{1}{2}\|K\|)^{k}$
from Lemma~\ref{lem:blkschur_ad} and the value of $\|K\|$ from
Remark~\ref{rem:Knrm} into Corollary~\ref{cor:poly_2norm}. To
prove sharpness, we take $n=\ell=2m$ and set $A=I_{n}$, $B=[I_{m},0_{m}]^{T}$,
and $D$ diagonal. Then, both $G_{22}$ and $K$ are diagonal by construction,
so $\|G_{22}^{k}\|=(\frac{1}{2}+\frac{1}{2}\|K\|)^{k}$ trivially
holds.
\end{proof}
Let us use Lemma~\ref{lem:admm_gmres} to prove a similar statement
for ADMM-GMRES.
\begin{theorem}
\label{thm:worst-case}The $k$-th iteration of ADMM-GMRES satisfies
\[
\frac{\|u^{k}-u^{\star}\|_{M}}{\|u^{0}-u^{\star}\|_{M}}\le\left(c_{0}+c_{1}\beta\right)\left(\frac{\gamma-1}{\gamma+1}\right)^{k-2}
\]
where $\gamma=\max\{L/\beta,\beta/\mu\}$, and $c_{0},c_{1}$ are
constants. The bound is sharp up to a multiplicative constant.
\end{theorem}
\begin{proof}
To establish the inequality, we set $p(z)$ in Lemma~\ref{lem:admm_gmres}
to be the monomial $p(z)=z^{k-2}$ and take $\|K^{k}\|\le\|K\|^{k}$.
To prove sharpness, we take $n=\ell=2m$ and give a problem construction
satisfying $\|K^{k}\|=\|K\|^{k}$ whose optimal polynomial is precisely
$p^{\star}(z)=z^{k-2}$. Consider
\begin{gather*}
A=I_{n},\qquad D=\begin{bmatrix}\frac{1}{\sqrt{\kappa}}I_{m} & 0\\
0 & \sqrt{\kappa}I_{m}
\end{bmatrix},\\
B=\begin{bmatrix}\cos\Theta\\
\sin\Theta
\end{bmatrix}\text{ where }\Theta=\frac{\pi}{2n}\diag(1,3,5,\ldots,n-1).
\end{gather*}
By inspection, $\mu=1/\sqrt{\kappa}$, $L=\sqrt{\kappa}$, and $L/\mu=\kappa$,
and $K(\sqrt{\mu L})$ is a scaled orthogonal matrix
\[
K(\sqrt{\mu L})=\frac{\sqrt{\kappa}-1}{\sqrt{\kappa}+1}\begin{bmatrix}\cos2\Theta & -\sin2\Theta\\
\sin2\Theta & \cos2\Theta
\end{bmatrix}
\]
whose $n$ eigenvalues lie evenly spaced along the circumference of
a circle centered at the origin with radius $a=\|K(\sqrt{\mu L})\|=\frac{\sqrt{\kappa}-1}{\sqrt{\kappa}+1}$.
The associate eigenvalue approximation problem is bound:
\[
\min_{\begin{subarray}{c}
p\in\P_{k-2}\\
p(1)=1
\end{subarray}}\|p(K(\sqrt{\mu L}))\|\overset{\text{(a)}}{=}\min_{\begin{subarray}{c}
p\in\P_{k-2}\\
p(1)=1
\end{subarray}}\max_{i\in\{1,\ldots,n\}}|p(a\omega^{i})|\overset{\text{(b)}}{\le}\min_{\begin{subarray}{c}
p\in\P_{k-2}\\
p(1)=1
\end{subarray}}\max_{\begin{subarray}{c}
\lambda\in\C\\
|\lambda|=a
\end{subarray}}|p(\lambda)|\overset{\text{(c)}}{=}a^{k-2},
\]
where $\omega=\exp(\sqrt{-1}\frac{2\pi}{n})$ is the $n$-th root
of unity. Step (a) makes a unitary eigendecomposition for the normal
matrix $K(\sqrt{\mu L})=XSX^{*}$, where $XX^{*}=X^{*}X=I$, and notes
that $\|p(K(\sqrt{\mu L}))\|=\|Xp(S)X^{*}\|=\|p(S)\|$. Step (b) encompasses
the roots of unity $\omega^{i}$ within the unit circle $\{z\in\C:|z|=1\}$.
Step (c) applies the closed-form solution $p^{\star}(z)=z^{k-2}$
due to Zarantonello (see~\cite{rivlin1974chebyshev} for a proof,
or~\cite{driscoll1998potential} for a more intuitive explanation).
In the limit $n\to\infty,$ the roots of unity converge uniformly
to the unit circle, and the inequality (b) converge uniformly towards
an equality. 
\end{proof}
\begin{figure}
\hfill{}\includegraphics[width=0.5\columnwidth]{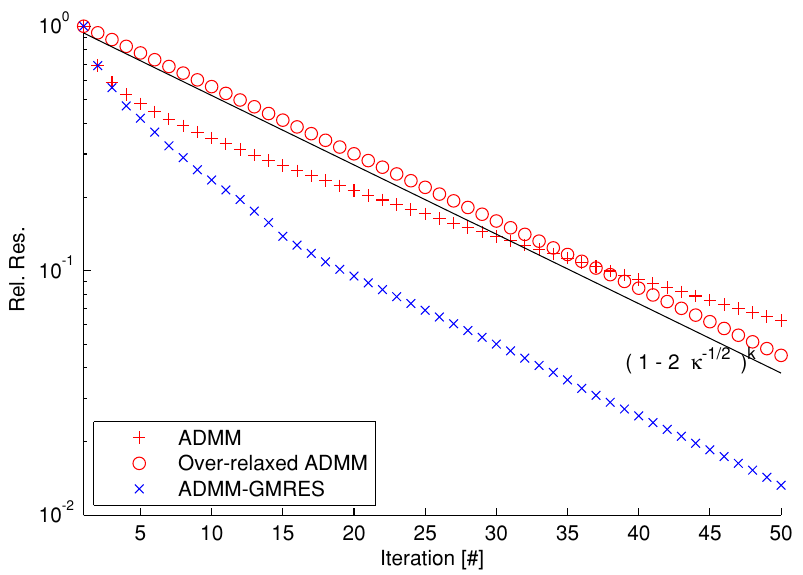}\hfill{}

\caption{\label{fig:worst-case}The problem construction in the proof of Theorem~\ref{thm:worst-case}
places the eigenvalues of $K(\beta)$ in a circle, so ADMM-GMRES convergences
at the same asymptotic rate as over-relaxed ADMM with $\omega=2$.}
\end{figure}
Setting $\beta=\sqrt{\mu L}$ minimizes value of $\gamma=\max\{L/\beta,\beta/\mu\}$.
This parameter choice allows ADMM to converge to an $\epsilon$-accurate
solution in 
\begin{equation}
\left\lceil (\sqrt{\kappa}+1)\log\left(\frac{c_{0}+c_{1}\beta}{\epsilon}\right)\right\rceil \text{ iterations,}\label{eq:admm_iter_bound}
\end{equation}
and ADMM-GMRES to do the same in
\begin{equation}
\left\lceil \frac{1}{2}(\sqrt{\kappa}+1)\log\left(\frac{c_{0}+c_{1}\beta}{\epsilon}\right)\right\rceil \text{ iterations.}\label{eq:orad_iter_bound}
\end{equation}
We see that ADMM-GMRES is only a factor of two better than basic ADMM.
In the worst case, GMRES will not be able to yield a substantial acceleration
over the basic ADMM method. This is easily verified numerically; see
Fig.~\ref{fig:worst-case}.

Indeed, the optimal polynomial used to prove Theorem~\ref{thm:worst-case}
may be extracted and explicitly applied as a successive over-relaxation
scheme. 
\begin{cor}
\label{cor:orad}Consider the successive over-relaxation (SOR) iterations
\begin{align*}
u^{1} & =G(\beta)u^{0}+v(\beta),\qquad u^{2}=G(\beta)u^{1}+v(\beta),\\
u^{j+1} & =(1-\omega)u^{j}+\omega[G(\beta)u^{j}+v(\beta)]\quad\forall j\in\{3,\ldots,k\}
\end{align*}
with $\beta=\sqrt{\mu L}$ and $\omega=2$. Then the $k$-th iterate
$u^{k}$ satisfies the bound in Theorem~\ref{thm:worst-case}.
\end{cor}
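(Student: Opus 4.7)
The plan is to show that the successive over-relaxation scheme described in the statement generates a sequence whose error, viewed as a polynomial in $G(\beta)$, matches a specific candidate polynomial that already yields the bound of Theorem~\ref{thm:worst-case} once passed through the reduction machinery of Lemma~\ref{lem:admm_gmres}. The crux is the algebraic identity $G_{22}(\beta) = \frac{1}{2} I + \frac{1}{2} K(\beta)$ from Lemma~\ref{lem:blkschur_ad}, which rewrites the factor $2z-1$ in $G_{22}$-coordinates as the identity $z$ in $K$-coordinates, so that the SOR step with $\omega=2$ exactly implements a monomial in $K(\beta)$.

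First I would set up the error recursion. Using $v(\beta) = (I - G(\beta)) u^\star$ and subtracting $u^\star$ from the SOR update gives
\[
u^{j+1} - u^\star = [(1-\omega) I + \omega G(\beta)](u^j - u^\star),
\]
which for $\omega = 2$ collapses to $u^{j+1} - u^\star = (2G(\beta) - I)(u^j - u^\star)$. Composing this with the two initial basic ADMM steps yields
\[
u^k - u^\star = (2 G(\beta) - I)^{k-2}\, G(\beta)^2\, (u^0 - u^\star) \qquad \text{for all } k \ge 2,
\]
so the SOR iterate realizes the error polynomial $q(z) = z^2 (2z-1)^{k-2}$, which satisfies the constraint $q(1) = 1$ imposed on candidate polynomials in Lemma~\ref{lem:admm_gmres}.

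Next I would run $q$ through the same reduction used to prove Lemma~\ref{lem:admm_gmres}. The factor $G(\beta)^2$ is absorbed by Corollary~\ref{cor:poly_2norm} with the choice $p(z) = (2z-1)^{k-2}$, reducing the task to bounding $\|(2 G_{22}(\beta) - I)^{k-2}\|$. Substituting $G_{22}(\beta) = \frac{1}{2}I + \frac{1}{2}K(\beta)$ collapses this to $\|K(\beta)^{k-2}\| \le \|K(\beta)\|^{k-2}$, and Remark~\ref{rem:Knrm} evaluates the right-hand side as $((\gamma-1)/(\gamma+1))^{k-2}$ with $\gamma = \max\{L/\beta, \beta/\mu\}$. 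Tracking the multiplicative constants from Corollary~\ref{cor:poly_2norm} through the reduction produces exactly the $(c_0 + c_1 \beta)$ prefactor of Theorem~\ref{thm:worst-case}.

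There is no genuine obstacle in this argument; it is essentially a verification once the error recursion is identified. The conceptual point worth highlighting is that $\omega = 2$ is precisely the over-relaxation parameter that transforms the SOR error polynomial $(2z-1)^{k-2}$ into the Zarantonello-optimal monomial $z^{k-2}$ under the affine change of variable $z \mapsto \frac{1}{2}(1+z)$ between $G_{22}$-coordinates and $K$-coordinates. In effect, this SOR scheme is a parameter-free implementation of the optimal Krylov polynomial for the worst-case family of problems constructed in Theorem~\ref{thm:worst-case}, which explains why GMRES does not improve upon it by more than a constant in that worst case.
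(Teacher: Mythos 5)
Your proposal is correct and follows the same approach as the paper's proof, but fills in the details that the paper's one-line argument ("collocate the roots of the SOR polynomial with those of the optimal polynomial from Theorem~\ref{thm:worst-case}") leaves implicit. You correctly identify that the SOR error polynomial is $q(z) = z^2(2z-1)^{k-2}$, that the $G^2$ factor is absorbed by Corollary~\ref{cor:poly_2norm}, and that the affine identity $G_{22}(\beta) = \frac{1}{2}I + \frac{1}{2}K(\beta)$ maps $(2z-1)^{k-2}$ onto the Zarantonello-optimal monomial $z^{k-2}$ in $K$-coordinates, from which $\|K(\beta)\|^{k-2} = ((\gamma-1)/(\gamma+1))^{k-2}$ via Remark~\ref{rem:Knrm}. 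This is exactly the chain of reductions underlying both Theorem~\ref{thm:worst-case} and the paper's proof of this corollary.
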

\begin{proof}
The SOR residuals satisfy $r^{k}=q(G)r^{0}$ where $q(z)=\prod_{i=1}^{k}[(1-\omega_{j})+\omega_{j}z]$.
Collocating its roots with those of the optimal polynomial $p^{\star}(z)$
in the proof of Theorem~\ref{thm:worst-case} yields the desired
iterates.
\end{proof}
This is precisely over-relaxed ADMM using the parameter choice of
$\omega=2$, which was shown to be optimal by several previous authors~\cite{davis2014faster,ghadimi2015optimal,nishihara2015general}.

\section{\label{sec:k4conv}Explaining Convergence in $O(\kappa^{1/4})$ Iterations}

In order to understand the circumstances that allow the ADMM-GMRES
converge an order-of-magnitude faster than basic ADMM, we make the
following assumption.
\begin{assume}[$\kappa_{X}$ is bounded]
\label{ass:kappaX}For a fixed $\beta>0$, the matrix $K(\beta)$,
defined in (\ref{eq:Kdef}), is diagonalizable. Furthermore, it has
an eigendecomposition $K(\beta)=XSX^{-1}$ whose matrix-of-eigenvectors
$X$ has a bounded condition number $\kappa_{X}=\|X\|\|X^{-1}\|$.
\end{assume}
Intuitively, we assume that the matrix $K(\beta)$ is close to normal,
so that its behavior can be accurately described by its eigenvalues
alone; we will return to this point later in Section~\ref{sec:normality}.
Substituting $\|p(K)\|=\|Xp(S)X^{-1}\|\le\|X\|\|p(S)\|\|X^{-1}\|$
reduces the residual minimization problem in Lemma~\ref{lem:admm_gmres}
to an \emph{eigenvalue approximation problem} (see e.g.~\cite{saad1986GMRES})
\begin{equation}
\frac{\|u^{k}-u^{\star}\|_{M}}{\|u^{0}-u^{\star}\|_{M}}\le(c_{0}+c_{1}\beta)\kappa_{X}\min_{\begin{subarray}{c}
p\in\P_{k-2}\\
p(1)=1
\end{subarray}}\|p(z)\|_{\Lambda},\label{eq:eig_approx}
\end{equation}
where we have used the maximum modulus notation
\[
\|p(z)\|_{\Lambda}\triangleq\max_{z\in\Lambda}|p(z)|,\qquad\Lambda\triangleq\{\lambda_{1},\ldots,\lambda_{m}\}.
\]
In this new problem, our objective is to construct a low-order polynomial
whose zeros are approximately the eigenvalues of $K(\beta)$. 

Problem (\ref{eq:eig_approx}) is made easier by enclosing the eigenvalues
$\Lambda$ within the disk $\mathcal{D}=\{z\in\C:|z|\le a\}$ mentioned
earlier in Remark~\ref{rem:Knrm}, with radius
\begin{equation}
a\triangleq\frac{\gamma-1}{\gamma+1}\text{ where }\gamma=\max\left\{ \frac{L}{\beta},\frac{\beta}{\mu}\right\} .\label{eq:disk_def}
\end{equation}
In view of Theorem~\ref{thm:worst-case}, this enclosure $\Lambda\subset\mathcal{D}$
is sharp: there exists a choice of problem data $A,B,D$ to place
$\Lambda$ right along the boundary $\partial\mathcal{D}$. The associated
optimal polynomial is simply $p^{\star}(z)=z^{k}$, but this causes
ADMM-GMRES to converge at the same rate as regular ADMM.

In order to improve upon the $O(\sqrt{\kappa})$ iteration estimate
from Theorem~\ref{thm:worst-case}, we must introduce additional
information about the distribution of eigenvalues within the interior
of the disk. Suppose, in particular, that all of our eigenvalues were
also real, i.e. $\Lambda\subset\R$. Then the corresponding approximation
problem over the real interval $\mathcal{I}\triangleq\R\cap\mathcal{D}$
has a closed-form solution attributed to Chebyshev (see~\cite[Ch.3]{greenbaum1997iterative}
or~\cite[Sec.6.11.1]{saad2003iterative})
\begin{equation}
\min_{\begin{subarray}{c}
p\in\P_{k}\\
p(1)=1
\end{subarray}}\|p(z)\|_{\mathcal{I}}=\frac{1}{|T_{k}(1/a)|}\le2\left(\frac{\sqrt{\gamma}-1}{\sqrt{\gamma}+1}\right)^{k},\label{eq:cheby}
\end{equation}
attained by $p^{\star}(z)=T_{k}(z/a)/|T_{k}(1/a)|$ where $T_{k}(z)$
is the order-$k$ Chebyshev polynomial of the first kind. Using the
Chebyshev polynomial to solve the eigenvalue approximation problem
(\ref{eq:eig_approx}) yields an optimal convergence rate of $(\kappa^{1/4}-1)/(\kappa^{1/4}+1)$
for the parameter choice $\beta=\sqrt{\mu L}$. In other words, ADMM-GMRES
converges to an $\epsilon$-accurate solution in $O(\kappa^{1/4})$
iterations, for an order of magnitude improvement over its worst-case.

\subsection{\label{subsec:cond}Damping the outlier eigenvalues}

\begin{figure}
\hfill{}\subfloat[]{\includegraphics[width=0.35\columnwidth]{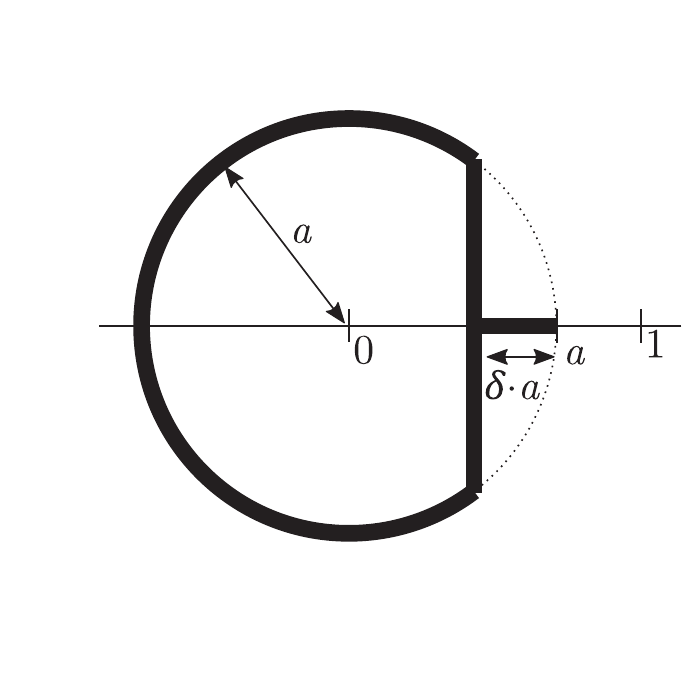}

}\hfill{}\subfloat[]{\includegraphics[width=0.35\columnwidth]{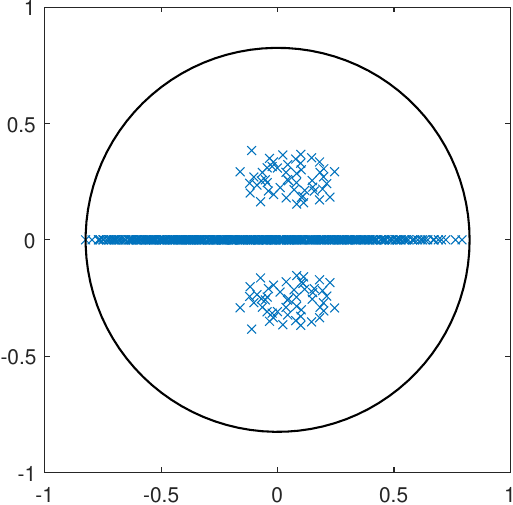}

}\hfill{}

\caption{\label{fig:disk-int}The enclosure $\mathcal{I}\cup\mathcal{C}$ bounds
the outlier eigenvalues away from the right-side of the disk by the
distance $\delta\cdot a$: (a) graphical illustration; (b) numerical
example, for a problem with $n=500$, $m=400$, $\ell=50$.}
\end{figure}
In practice, $K(\beta)$ also has a number of eigenvalues with nonzero
imaginary parts. These eigenvalues prevent (\ref{eq:cheby}) from
being directly applicable, so we refer to them as \emph{outlier} eigenvalues.
The issue of outliers is standard in Krylov subspace methods. If the
number of outliers is small, then a standard technique is to annihilate
them one at a time, and to apply a Chebyshev approximation to the
remaining eigenvalues that lie along a line; see~\cite[p.53]{greenbaum1997iterative}
or~\cite[Sec.5]{driscoll1998potential}. In our numerical experiments,
however, the number of outlier eigenvalues was often observed to be
quite large.

Instead, let us assume a different structure: that the outlier eigenvalues
are \emph{better conditioned} than the real eigenvalues. Rather than
annihilating them one at a time, it may be sufficient to ``dampen''
their effect using a few fixed-point iterations, like in multigrid
methods. Then, the Chebyshev approximation can be used to approximate
the remaining purely-real but poorly-conditioned eigenvalues. Since
GMRES is optimal, it must converge faster than this heuristic approach.

Consider the eigenvalue enclosure $\Lambda\{K\}\subset\mathcal{I}\cup\mathcal{C}$,
where $\mathcal{I}=\R\cap\mathcal{D}$ is the same real interval considered
in the previous section, and 
\begin{align}
\mathcal{C} & \triangleq\{z\in\mathcal{D}:\mathrm{Re}\{z\}\le\delta\cdot a\},\qquad\delta\triangleq1-\max_{\begin{subarray}{c}
\lambda\in\Lambda\{K(\beta)\}\\
\Imag\lambda\ne0
\end{subarray}}\frac{\mathrm{Re}\,\lambda}{\|K(\beta)\|}\ge0\label{eq:S_I_def}
\end{align}
is used to encompasses the outlier eigenvalues; an illustration is
shown in Fig.~\ref{fig:disk-int}. We view the quantity $\delta^{-1}$
as a \emph{relative} condition number of $\mathcal{C}$, due to the
following result.
\begin{lemma}
\label{lem:disk-minus-seg}The approximation problem for $\mathcal{C}$
in (\ref{eq:S_I_def}) is bounded
\[
\min_{\begin{subarray}{c}
p\in\P_{k}\\
p(1)=1
\end{subarray}}\|p(z)\|_{\mathcal{C}}\le\left(\frac{2a}{1+a}\right)^{k}\left(1-\frac{\delta}{2}\right)^{k/2},
\]
using the polynomial $p(z)=(z+a)^{k}/(1+a)^{k}$.
\end{lemma}
\begin{proof}
We use the over-relaxation polynomial $p(z)=(z+\omega)^{k}(1+\omega)^{-k}$
to approximate $\mathcal{C}$. Since $\omega>0$, the maximum modulus
is attained at $z^{\star}=\arg\max_{z\in\mathcal{C}}|p(z)|=a\left[(1-\delta)\pm j\sqrt{1-(1-\delta)^{2}}\right]$.
Now, 
\[
\frac{|z^{\star}+\omega|^{2}}{(1+\omega)^{2}}=\frac{[a(1-\delta)+\omega]^{2}+a^{2}[1-(1-\delta)^{2}]}{(1+\omega)^{2}}=\frac{(a+\omega)^{2}}{(1+\omega)^{2}}\left(1-\frac{2a\delta\omega}{(a+\omega)^{2}}\right).
\]
Setting $\omega=a$ maximizes the ratio $2\delta\omega/(a+\omega)^{2}$
and yields the desired bound.
\end{proof}
Fixing $\delta>0$, the error over $\mathcal{C}$ can be dampened
to some prescribed accuracy in a fixed number of iterations, independent
of all other considerations. This is the central insight that we use
in our new iteration estimate; so long as $\delta>0$, ADMM-GMRES
will converge in $O(\kappa^{1/4})$ iterations.
\begin{theorem}
\label{thm:disk_seg_conv}Under Assumption~\ref{ass:kappaX}, the
$k$-th iterate of ADMM-GMRES satisfies
\begin{equation}
\frac{\|u^{k}-u^{\star}\|_{M}}{\|u^{0}-u^{\star}\|_{M}}\le2(c_{0}+\beta c_{1})\kappa_{X}\left(\frac{\sqrt{\gamma}-1}{\sqrt{\gamma}+1}\right)^{\delta(k-2)/6},\label{eq:disk_seg_optim_constr}
\end{equation}
where $\delta$ is defined in (\ref{eq:S_I_def}), and $c_{0},c_{1},\kappa_{X}$
are constants.
\end{theorem}
Our proof solves the approximation problem (\ref{eq:eig_approx})
over $\mathcal{I}\cup\mathcal{C}\supset\Lambda$ using the following
polynomial
\[
p_{k}(z)\triangleq\left(\frac{z+a}{1+a}\right)^{\eta}\frac{T_{\xi}(z/a)}{|T_{\xi}(1/a)|},\qquad\xi\triangleq k-\eta,
\]
which is constructed as the product of $\eta$ fixed-point iterations
and an order-$\xi$ Chebyshev polynomial from (\ref{eq:cheby}). Intuitively,
each increment of $\xi$ decreases global error by $(\sqrt{\gamma}-1)/(\sqrt{\gamma}+1)$,
but also increases the relative error between $\mathcal{C}$ and $\mathcal{I}$
by a constant factor. To smooth this error between the two regions,
we increment $\eta$ by a fixed number determined by Lemma~\ref{lem:disk-minus-seg}.
Alternating between decrementing the global error and smoothing the
relative error allows us to converge at the overall accelerated rate
of $(\sqrt{\gamma}-1)/(\sqrt{\gamma}+1)$.
\begin{proof}
Noting that $\|p_{k}(z)\|_{\Lambda}\le\|p_{k}(z)\|_{\mathcal{I}\cup\mathcal{C}}=\max\left\{ \|p_{k}(z)\|_{\mathcal{I}},\|p_{k}(z)\|_{\mathcal{C}}\right\} $,
we bound each component
\begin{align}
\|p_{k}(z)\|_{\mathcal{C}} & \le\left(\frac{2a}{1+a}\right)^{\eta}\left(1-\frac{\delta}{2}\right)^{\eta/2}\frac{(1+\sqrt{2})^{\xi}}{T_{\xi}(1/a)},\label{eq:max_D}\\
\|p_{k}(z)\|_{\mathcal{I}} & =\left(\frac{2a}{1+a}\right)^{\eta}\frac{1}{T_{\xi}(1/a)}\le2\left(\frac{\sqrt{\gamma}-1}{\sqrt{\gamma}+1}\right)^{\xi},\label{eq:max_S}
\end{align}
using Lemma~\ref{lem:delta_bnd} and $\max_{|z|\le1}|T_{n}(z)|\le(1+\sqrt{2})^{n}$.
We will pick the ratio $\eta/\xi$ to satisfy
\begin{equation}
\left(1-\frac{\delta}{2}\right)^{\eta/\xi}\le\frac{1}{(1+\sqrt{2})^{2}},\label{eq:c_ineq}
\end{equation}
so that we have $\|p_{k}(z)\|_{\mathcal{I}}\ge\|p_{k}(z)\|_{\mathcal{C}}$.
Viewing $\eta/\xi$ as an ``iteration estimate'' to guarantee a
constant error reduction of $\epsilon=1/(1+\sqrt{2})^{2}$ over $\mathcal{C}$,
we take logarithms and obtain $\eta/\xi\ge2\delta^{-1}\log\epsilon^{-1}=c_{3}\delta^{-1}$
and $\xi\ge\delta(k-2)/6$, noting that $\delta\le2$. With $\|p_{k}(z)\|_{\mathcal{I}}=\|p_{k}(z)\|_{\mathcal{I}\cup\mathcal{C}}$
now guaranteed, we take the second expression in (\ref{eq:max_S})
to be the global error estimate $\|p_{k}(z)\|_{\mathcal{I}}\ge\|p_{k}(z)\|_{\Lambda}$
for (\ref{eq:eig_approx}).
\end{proof}
Theorem~\ref{thm:disk_seg_conv} says that ADMM-GMRES will converge
to an $\epsilon$-accurate solution in 
\[
2+\left\lceil \frac{6(\kappa^{1/4}+1)}{\delta}\log\left(\frac{2\kappa_{X}(c_{0}+c_{1}\beta)}{\epsilon}\right)\right\rceil \text{ iterations}
\]
using the parameter choice $\beta=\sqrt{\mu L}$. This is a factor
of $O((\kappa^{1/4}/\delta)\log(\kappa_{X}/\epsilon))$. So long as
$\delta$ is not too small relative to $1/\kappa^{1/4}$ and $\kappa_{X}$
not too big relative to $1/\epsilon$, Theorem~\ref{thm:disk_seg_conv}
guarantees convergence in $O(\kappa^{1/4})$ iterations.

\subsection{\label{subsec:explain_empirical}Explaining the empirical results}

\begin{figure}
\hfill{}\subfloat[]{\includegraphics[width=0.4\columnwidth]{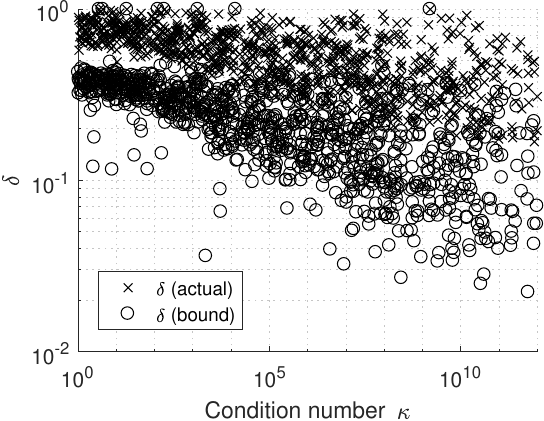}

}\hfill{}\subfloat[]{\includegraphics[width=0.4\columnwidth]{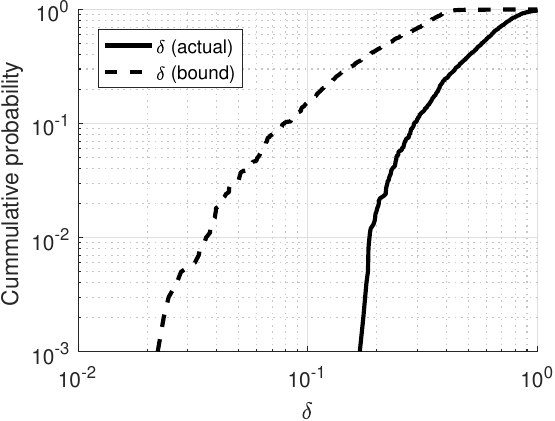}

}\hfill{}

\hfill{}\subfloat[]{\includegraphics[width=0.4\columnwidth]{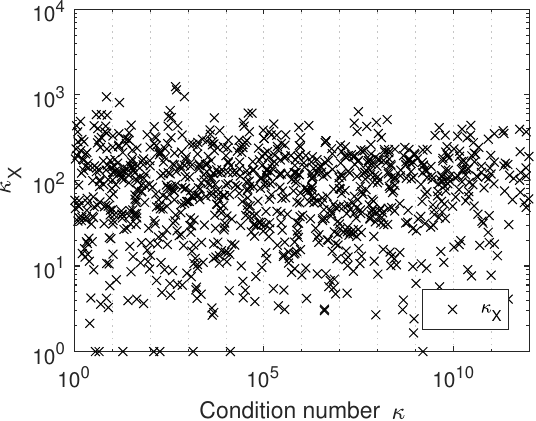}

}\hfill{}\subfloat[]{\includegraphics[width=0.4\columnwidth]{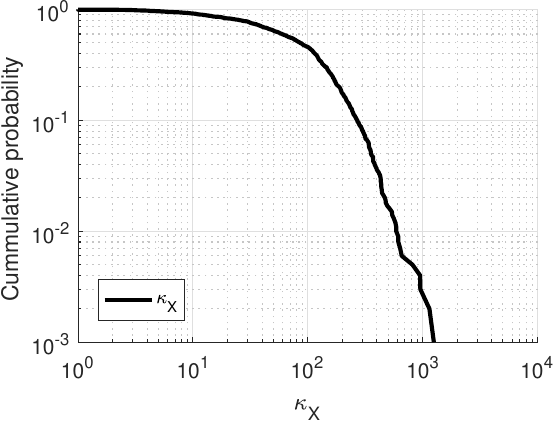}

}\hfill{}

\caption{\label{fig:thm_stats}Statistics for $\delta$ and $\kappa_{X}$ for
the 1000 randomly-generated problems in Fig.~\ref{fig:first_comparison}:
(a) \& (b) scatter plot and empirical CDF for $\delta$ and its lower
bound $\delta_{\protect\lb}$ (Lemma~\ref{lem:delta_bnd}); (c) \&
(d) scatter plot and empirical CDF for $\kappa_{X}$.}
\end{figure}

Earlier in the introduction, we presented a comparison of ADMM and
ADMM-GMRES for 1000 random trials. These problems were generated using
the following algorithm.

\begin{algor}\label{algr:log-normal}Input: dimension parameters
$n$, $\ell\le n$, $m\le\ell$ and conditioning parameter $s>0$.
\\
Output: random data matrices $D\in\S^{n}$, $A\in\R^{\ell\times n}$,
$B\in\R^{\ell\times m}$ satisfying Assumption~\ref{ass:regularity}.
\begin{enumerate}
\item Select the singular vectors $U_{A},U_{B},U_{D},V_{A},V_{B}$ i.i.d.
uniformly from their respective orthogonal groups. 
\item Select the singular values $\Sigma_{A},\Sigma_{B},\Sigma_{D}$ i.i.d.
from the log-normal distribution $\sim\mathrm{exp}(0,s^{2})$. 
\item Output $A=U_{A}\Sigma_{A}V_{A}^{T}$, $B=U_{B}\Sigma_{B}V_{B}^{T}$,
and $D=U_{D}\Sigma_{D}U_{D}^{T}$.
\end{enumerate}
\end{algor}

More specifically, the dimension parameters $n,\ell,m$ were uniformly
sampled from $n=1000$, $\ell\in\{1,\ldots,n\}$, and $m\in\{1,\ldots,\ell\}$,
and the log-standard-deviation is swept within the range $s\in[0,2]$.
Both algorithms are tasked with solving the equation to a relative
residual of $\epsilon=10^{-6}$. ADMM fails to converge within 100,000
iterations for 49 of the problems, while ADMM-GMRES converges on all
of the problems.

To verify whether Theorem~\ref{thm:disk_seg_conv} is sufficient
to explain the $O(\kappa^{1/4})$ behavior seen in these 1000 random
problems, we plot the distribution of $\delta$ and $\kappa_{X}$
with respect to $\kappa$ in Fig.~\ref{fig:thm_stats}a and Fig.~\ref{fig:thm_stats}c.
The smallest value of $\delta$ is 0.06, with mean and median both
around 0.6. The largest value of $\kappa_{X}$ is 775, with mean and
median both around 50. These are both relatively modest, and as predicted
by Theorem~\ref{thm:disk_seg_conv}, ADMM-GMRES converges in $O(\kappa^{1/4})$
iterations.

The associated cumulative probability distributions are shown in Fig.~\ref{fig:thm_stats}b
and Fig.~\ref{fig:thm_stats}d. An exponentially decaying probability
tail for both quantities can be observed. The rapid roll-off in probability
tail is a signature trait for \emph{concentration-of-measure} type
results. In the case of $\delta$, consider the following bound.
\begin{lemma}
\label{lem:delta_bnd}Define $K(\beta)$, $Q$, $P$ as in (\ref{eq:Kdef}).
Then
\[
\delta_{\lb}\triangleq1-\frac{\|Q^{T}K(\beta)Q\|+\|P^{T}K(\beta)P\|}{2\|K(\beta)\|}\le\delta.
\]
\end{lemma}
\begin{proof}
Note that $K(\beta)$ has the following block structure $K=\left[\begin{smallmatrix}X & Z\\
-Z^{T} & Y
\end{smallmatrix}\right]$. For such matrices, Benzi~\&~Simoncini~\cite{benzi2006eigenvalues}
used a field-of-values type argument to show that if $\lambda\in\Lambda\{K\}$
and $\mathrm{Im}\{\lambda\}\ne0$, then $|\mathrm{Re}\lambda|\le\frac{1}{2}\left[\|X\|+\|Y\|\right]$.
Substituting the definitions of $\delta$, $X$, and $Y$ results
in the desired bound.
\end{proof}
Hence, we see that the quantity $\delta$ is bounded away from zero
because the matrices $K$, $Q$, $P$ are incoherent. More specifically,
let us write $K=U\Sigma V^{T}$ as its singular value decomposition.
If we treat $Q$, $P$, $U$, and $V$ all as random orthogonal matrices,
then the matrices $Q^{T}U$, $P^{T}U$, $Q^{T}V$, and $P^{T}V$ are
all dense with an overwhelming probability~\cite[Thm.VIII.1]{donoho2001uncertainty}.
This observation bounds the expected value of $\|Q^{T}K(\beta)Q\|$
and $\|P^{T}K(\beta)P\|$ away from $\|K(\beta)\|$, thereby bounding
$\delta$ away from zero via Lemma~\ref{lem:delta_bnd}.

\section{\label{sec:normality}The normality assumption}

A weakness in our argument is Assumption~\ref{ass:kappaX}, which
takes $\kappa_{X}$, the condition number for the matrix of eigenvectors
of $K(\beta)$, to be bounded. The assumption is closely related to
the \emph{normality} of $K(\beta)$. A matrix is normal if it has
a complete set of orthogonal eigenvectors, so if $K(\beta)$ is normal,
then $\kappa_{X}=1$, and our bounds are sharp up to a multiplicative
factor. On the other hand, if $K(\beta)$ is nonnormal, then our bounds
may fail to be sharp to an arbitrary degree. The phenomenon has to
do with the fact that eigenvalues are not necessarily meaningful descriptors
for the behavior of nonnormal matrices; see the discussions in~\cite{greenbaum1997iterative,driscoll1998potential,embree1999descriptive}
for more details, and the book~\cite{trefethen2005spectra} for a
thorough exposition.

In the case of ADMM, there are reasons to believe that $K(\beta)$
is relatively close to normal, and that Assumption~\ref{ass:kappaX}
is not too strong in practice. To explain, consider the following
dimensionless nonnormality measure
\[
\nu(A)\triangleq\|A^{T}A-AA^{T}\|_{F}^{1/2}/\|A\|_{F},
\]
which takes on values from 0 (attained by any normal matrix) to $\sqrt{2}$
(attained by highly nonnormal matrices like $\left[\begin{smallmatrix}0 & 0\\
1 & 0
\end{smallmatrix}\right]$). The measure is closely associated to Henrici's departure from normality~\cite{henrici1962bounds},
and can be used to bound many other measures of nonnormality; see
the survey in~\cite{elsner1987measures}.
\begin{proposition}
\label{prop:normality}Let $K(\beta)$ be the $\ell\times\ell$ matrix
in (\ref{eq:Kdef}). Then
\[
\nu(K(\beta))\le\frac{\left(8\min\{m,\ell-m\}\right)^{1/4}}{\|K(\beta)\|_{F}/\|K(\beta)\|}\le\sqrt{2}\ell^{1/4}\frac{\|K(\beta)\|}{\|K(\beta)\|_{F}}.
\]
\end{proposition}
\begin{proof}
Note that $K\equiv K(\beta)$ has the structure $K=JU^{T}WU$, where
$J=\diag(I_{m},-I_{\ell-m})$, $U$ is orthonormal, and $W=W^{T}$
shares its singular values with $K$. Then $\|K^{T}K-KK^{T}\|_{F}^{2}=\|UW^{2}U-JU^{T}W^{2}UJ\|_{F}^{2}=2\|2Q^{T}W^{2}P\|_{F}^{2}$,
where $Q$ is the first $m$ columns of $U$, and $P$ is the remaining
$\ell-m$ columns. But $\|Q^{T}W^{2}P\|_{F}^{2}=\tr W^{2}QQ^{T}W^{2}PP^{T}\le\|W\|^{4}\min\{\tr QQ^{T},\tr PP^{T}\}$,
and taking fourth roots produces the first inequality. The second
inequality follows by maximizing the bound with $\ell=m/2$.
\end{proof}
In the literature, the ratio $\|K\|_{F}^{2}/\|K\|^{2}$ is sometimes
known as the \emph{numerical rank} of $K$; see~\cite{rudelson2007sampling}.
Taking on a value from $1$ to $\ell$, it is always bounded by, and
is a stable relaxation of, the rank of $K$.

Assuming that the numerical rank of $K$ grows linearly with its dimension
$\ell$ (e.g. if the data were generated using Algorithm~\ref{algr:log-normal}),
then substituting $\|K\|_{F}^{2}/\|K\|^{2}\in\Omega(\ell)$ into Proposition~\ref{prop:normality}
produces $\nu(K)\in O(\ell^{-1/4})$. The matrix $K$ becomes more
and more normal as its dimension $\ell$ grows large, since $\nu(K)$
decays to zero. While the observation does not provide a rigorous
bound for $\kappa_{X}$, it does concur with our numerical results
presented in later sections.

Finally, even if Assumption~\ref{ass:kappaX} fails to hold, ADMM-GMRES
will still obey its worst-case bound (Proposition~\ref{thm:worst-case}).
Convergence is guaranteed in $O(\sqrt{\kappa})$ iterations, although
the significant acceleration from GMRES may be lost.

\section{\label{sec:num_res}Comparison with classical preconditioners}

\global\long\def\A{\mathbf{A}}
\global\long\def\vector{\mathrm{vec}\,}
Throughout this paper, we have treated ADMM-GMRES as a preconditioned
Krylov subspace method for the KKT equations associated with (\ref{eq:ecqp})
\begin{equation}
\left[\begin{array}{cc|c}
D & 0 & A^{T}\\
0 & 0 & B^{T}\\
\hline A & B & 0
\end{array}\right]\left[\begin{array}{c}
x\\
z\\
\hline y
\end{array}\right]=\left[\begin{array}{c}
-c\\
-p\\
\hline d
\end{array}\right],\label{eq:saddle}
\end{equation}
while assuming that matrix-vector products with $(\beta^{-1}D+A^{T}A)^{-1}$,
$(B^{T}B)^{-1}$, $A$, $B$, $A^{T}$ and $B^{T}$ can be efficiently
performed, i.e. efficient oracles are available. But (\ref{eq:saddle})
is a standard saddle-point system\textemdash albeit with a singular
$(1,1)$ block\textemdash and preconditioned Krylov subspace methods
for such problems are mature and well-developed; we refer the reader
to the authoritative surveys~\cite{benzi2005numerical,axelsson2015unified}.
A number of classical preconditioners can be constructed using these
same oracles, many of them even sharing the same $O(\sqrt{\kappa})$
worst-case iteration bound as ADMM. 

An important finding in this paper is that the average-case behavior
of ADMM-GMRES is considerably better than its worst-case. In fact,
our results from Section~\ref{sec:k4conv} suggest that the worst-case
bound is almost never attained, except on artificially constructed
``degenerate'' problems. It is natural to ask whether the same thing
can be said for the alternative preconditioners, which are constructed
using the same ingredients. Will they also converge in $O(\kappa^{1/4})$
iterations? Or will they readily attain their worst-case bound of
$O(\sqrt{\kappa})$ iterations?

In this section, we benchmark ADMM-GMRES against classical preconditioned
Krylov subspace methods for saddle-point problems, on random instances
of (\ref{eq:saddle}) generated using Algorithm~\ref{algr:log-normal}.
We restrict our attention to classical preconditioners that are based
on the same six matrix-vector products listed above, and also matrix-vector
products with $AD^{-1}A^{T}$ and its inverse $(AD^{-1}A^{T})^{-1}$,
the former of which arises by block-eliminating $D$ from (\ref{eq:saddle}).
Our goal is to compare the \emph{number of iterations} needed by different
preconditioners to solve the same $\kappa$-conditioned problem to
$\epsilon$-accuracy. 

A practical issue overlooked by a direct comparison of iteration counts
(and the number of oracle calls) is that some oracles are considerably
more expensive to call. To account for this, we use the CPU time as
a \emph{weighted tally of oracle calls}, by implementing the \emph{relative
timings} of the oracles proportional to their real-life values. To
this end, we implement the matrix-vector products with $A,$ $B,$
$A^{T},$ and $B^{T}$ explicitly, and make the following assumption
to reduce the cost of matrix-vector products with $(\beta^{-1}D+A^{T}A)^{-1},$
$(B^{T}B)^{-1},$ $AD^{-1}A^{T},$ and $(AD^{-1}A^{T})^{-1}$.
\begin{assume}
\label{ass:precomp}The Cholesky factorizations $B^{T}B=L_{B}L_{B}^{T}$,
$D=L_{D}L_{D}^{T}$, and the eigendecomposition $(AD^{-1}A^{T})^{-1}=V\Lambda V^{T}$
are explicitly available. 
\end{assume}
Using Assumption~\ref{ass:precomp}, matrix-vector products with
$(B^{T}B)^{-1}$ may be implemented as $z\mapsto L_{B}^{-T}L_{B}^{-1}z$,
those with $AD^{-1}A^{T},$ $(AD^{-1}A^{T})^{-1}$ may be implemented
as $y\mapsto V\Lambda^{-1}V^{T}y$ and $y\mapsto V\Lambda V^{T}y$,
and those with $(\beta^{-1}D+A^{T}A)^{-1}$ may be implemented as
\begin{align*}
x\mapsto(\beta^{-1}D+A^{T}A)^{-1}x & =\beta D^{-1}\left[I-A^{T}(\beta^{-1}I+AD^{-1}A^{T})^{-1}AD^{-1}\right]x.
\end{align*}
The assumption is modeled after the Newton subproblem in Section 8,
which satisfies it in near-linear time due to a Kronecker structure
described in Section~\ref{subsec:Implementation}. It is slightly
stronger that what is strictly necessary to realize the oracles for
the comparison; we adopt it because it closely mimics the implementation
considerations for Section 8. 

\subsection{Classical preconditioners}

We restrict our attention to classical preconditioners that can be
realized using only matrix-vector products with $(\beta^{-1}D+A^{T}A)^{-1},$
$(B^{T}B)^{-1},$ $A,$ $B,$ $A^{T},$ $B^{T}$, $AD^{-1}A^{T},$
and $(AD^{-1}A^{T})^{-1}$. These methods are selected from the survey~\cite{benzi2005numerical},
and solve either the reduced augmented system
\begin{equation}
\left[\begin{array}{c|c}
0 & B^{T}\\
\hline B & -AD^{-1}A^{T}
\end{array}\right]\left[\begin{array}{c}
z\\
\hline y
\end{array}\right]=\left[\begin{array}{c}
-p\\
\hline \tilde{d}
\end{array}\right],\label{eq:kkt2}
\end{equation}
or the Schur complement problem
\begin{equation}
\left[B^{T}(AD^{-1}A^{T})^{-1}B\right]z=-\tilde{p},\label{eq:schur}
\end{equation}
where the new right-hand sides are obtained via forward substitution
\begin{equation}
\tilde{d}=d+AD^{-1}c,\qquad\tilde{p}=p-B^{T}(AD^{-1}A^{T})^{-1}\tilde{d},\label{eq:fwdsub}
\end{equation}
and the unknown variables are recovered via back substitution
\begin{equation}
y=(AD^{-1}A^{T})^{-1}(Bz-\tilde{d}),\qquad x=-D^{-1}(A^{T}y+c).\label{eq:bwdsub}
\end{equation}
Note that each forward substitution (\ref{eq:fwdsub}), backward substitution
(\ref{eq:bwdsub}), and matrix-vector product with (\ref{eq:kkt2})
and (\ref{eq:schur}) can be performed using only the 8 matrix-vector
oracles listed above.

\textbf{Block-diagonal preconditioner (Blk-Diag).} Solve the reduced
augmented system (\ref{eq:kkt2}) using a symmetric indefinite Krylov
method like MINRES, with the positive definite matrix
\[
M_{1}=\left[\begin{array}{c|c}
\beta B^{T}B & 0\\
\hline 0 & AD^{-1}A^{T}
\end{array}\right]
\]
serving as preconditioner. The $(2,2)$ block of $M_{1}$ matches
that of $\hat{H}$, while its $(1,1)$ block is used to precondition
the Schur complement. The preconditioned matrix has the eigenvalue
$-1$ with multiplicity $m-\ell$, and $2\ell$ eigenvalues $\lambda_{i}=\frac{1}{2}(-1\pm\sqrt{1+4\eta_{i}})$,
where $\eta_{1},\ldots,\eta_{\ell}$ are the $\ell$ eigenvalues of
$\beta^{-1}Q^{T}\tilde{D}Q$~\cite[Lem.2.1]{fischer1998minimum}
(see also~\cite[Thm.3.8]{benzi2005numerical} and \cite[Lem.2.1]{rusten1992preconditioned}).
Applying the classic two-interval approximation result~\cite{de1982extremal}
(see also~\cite[Ch.3]{greenbaum1997iterative}) shows that MINRES
converges to an $\epsilon$-accurate solution within $k\le1+\left\lceil \sqrt{\kappa}\log[(1+\sqrt{1+4L/\beta})/\epsilon]\right\rceil $
iterations. We set $\beta=L$ to obtain convergence in $O(\sqrt{\kappa})$
iterations.

\textbf{Constraint preconditioner I (Constr I). }Solve the reduced
augmented system (\ref{eq:kkt2}) using a general Krylov method like
GMRES, with
\[
M_{2}=\left[\begin{array}{c|c}
0 & B^{T}\\
\hline B & -\beta I
\end{array}\right]=\left[\begin{array}{c|c}
I & \beta^{-1}B^{T}\\
\hline 0 & I
\end{array}\right]\left[\begin{array}{c|c}
\beta^{-1}B^{T}B & 0\\
\hline 0 & -\beta I
\end{array}\right]\left[\begin{array}{c|c}
I & 0\\
\hline \beta^{-1}B & I
\end{array}\right]
\]
serving as preconditioner. The preconditioner is designed to replicate
the governing matrix, while modifying the $(2,2)$ block in a way
as to make the overall matrix considerably easier to invert. The preconditioned
matrix has the eigenvalue 1 with multiplicity $2\ell$, and $m-\ell$
eigenvalues that coincide with the eigenvalues of $\beta^{-1}P^{T}\tilde{D}^{-1}P$~\cite[Thm.2.1]{keller2000constraint};
see also~\cite[Thm.10.1]{benzi2005numerical}. The latter $m-\ell$
eigenvalues lie within the real interval $[1/(L\beta),1/(\mu\beta)]$,
so assuming diagonalizability (i.e. adopting a version of Assumption~\ref{ass:kappaX}),
GMRES converges within $O(\sqrt{\kappa})$ iterations for all choices
of $\beta\ge1/L$. We set $\beta=\sqrt{\mu L}$ to concur with ADMM.

\textbf{Constraint preconditioner II (Constr II). }Solve the Schur
complement system (\ref{eq:schur}) using a symmetric positive definite
Krylov method like conjugate residuals, with 
\[
M_{3}=B^{T}B
\]
serving as preconditioner. This is derived by using the Schur complement
from the previous preconditioner to precondition the Schur complement
of (\ref{eq:kkt2}). The preconditioned problem has coefficient matrix
$Q^{T}\tilde{D}Q,$ whose eigenvalues lie in the real interval $[\mu,L]$.
Accordingly, conjugate residuals converges within $O(\sqrt{\kappa})$
iterations.

\textbf{Hermitian Skew-Hermitian Splitting (HSS).} Solve the reduced
augmented system (\ref{eq:kkt2}) using a general Krylov method like
GMRES, with
\[
M_{4}=\left[\begin{array}{c|c}
\alpha I & 0\\
\hline 0 & -(AD^{-1}A^{T}+\alpha I)
\end{array}\right]\left[\begin{array}{c|c}
\alpha I & B^{T}\\
\hline -B & \alpha I
\end{array}\right]
\]
as preconditioner. Small choices of the parameter $\alpha$ work best,
though the method is not sensitive to its exact value~\cite{benzi2003optimization,benzi2004preconditioner,simoncini2004spectral}.
Note that $M_{4}$ requires matrix-vector products with $(\alpha^{2}I+B^{T}B)^{-1}$
to be efficient. When $\alpha$ is sufficiently small, this matrix
may be approximated, e.g. using a few iteration of conjugate gradients
preconditioned by $B^{T}B$. To keep our implementation simple, we
set $\alpha=1/L$ (as recommended by Simoncini and Benzi~\cite{simoncini2004spectral})
and explicitly precompute a Cholesky factorization for $\alpha^{2}I+B^{T}B$.

Our list excludes the Uzawa method (and its inexact variants), incomplete
factorizations, and multilevel / hierarchical preconditioners, because
they cannot be efficiently realized using the 8 matrix-vector oracles
alone. We have excluded the Arrow\textendash Hurwicz as it is simply
a lower-cost, less accurate version of ``Constr I''. We have also
excluded the block-triangular version of ``Blk-Diag'', because it
can be shown to be almost identical to ``Constr II'' after a single
iteration, but requires the more computationally expensive GMRES algorithm.

\subsection{Results}

\begin{table}
\caption{\label{tab:1k}Max. Iterations (Max. CPU time in seconds) to $\epsilon=10^{-6}$
for random problems with dimensions $n=1000$, $1\le m\le n$, $1\le\ell\le m$.
``ADGM'' refers to ADMM-GMRES, and ``ADGM($k$)'' refers to ADMM-GMRES($k$),
i.e. with restart parameter $k$.}

\hfill{}%
\begin{tabular}{c|c|c|c|c|c}
$\log_{10}\kappa$ & \multicolumn{1}{c|}{$(0,2]$} & \multicolumn{1}{c|}{$(2,4]$} & \multicolumn{1}{c|}{$(4,6]$} & $(6,8]$ & $(8,10]$\tabularnewline
Num. trials & 204 & 192 & 169 & 185 & 135\tabularnewline
\hline 
$10\sqrt{\kappa}$ & \multicolumn{1}{c|}{32} & \multicolumn{1}{c|}{316} & \multicolumn{1}{c|}{3162} & 31,623 & 316,228\tabularnewline
ADMM & 126 (0.97) & 982 (5.09) & $>10^{3}$ & $>10^{3}$ & $>10^{3}$\tabularnewline
Blk-Diag & 102 (0.52) & 506 (3.27) & $>10^{3}$ & $>10^{3}$ & $>10^{3}$\tabularnewline
Constr I & 42 (0.45) & 155 (1.19) & 269 (2.11) & 553 (3.10) & 678 (4.43)\tabularnewline
Constr II & 49 (0.19) & 227 (1.28) & 722 (3.00) & $>10^{3}$ & $>10^{3}$\tabularnewline
HSS & 97 (2.72) & 278 (12.6) & 532 (19.4) & $>10^{3}$ & $>10^{3}$\tabularnewline
\hline 
$6\kappa^{1/4}$ & \multicolumn{1}{c|}{11} & \multicolumn{1}{c|}{34} & \multicolumn{1}{c|}{107} & 337 & 1067\tabularnewline
ADGM & 13 (0.23) & 29 (0.55) & 76 (1.24) & 198 (3.94) & 469 (6.44)\tabularnewline
ADGM(5) & 14 (0.25) & 50 (0.68) & $>10^{3}$ & $>10^{3}$ & $>10^{3}$\tabularnewline
ADGM(10) & 13 (0.21) & 37 (0.51) & $>10^{3}$ & $>10^{3}$ & $>10^{3}$\tabularnewline
ADGM(25) & 13 (0.20) & 30 (0.50) & $>10^{3}$ & $>10^{3}$ & $>10^{3}$\tabularnewline
\hline 
\end{tabular}\hfill{}

\medskip{}

\caption{\label{tab:3k}Max. Iterations (Max. CPU time in seconds) to $\epsilon=10^{-6}$
for random problems with dimensions $n=3000$, $1\le m\le n$, $1\le\ell\le m$.}

\hfill{}%
\begin{tabular}{c|c|c|c|c|c}
$\log_{10}\kappa$ & \multicolumn{1}{c|}{$(0,2]$} & \multicolumn{1}{c|}{$(2,4]$} & \multicolumn{1}{c|}{$(4,6]$} & $(6,8]$ & $(8,10]$\tabularnewline
Num. trials & 242 & 236 & 210 & 195 & 110\tabularnewline
\hline 
$10\sqrt{\kappa}$ & \multicolumn{1}{c|}{32} & \multicolumn{1}{c|}{316} & \multicolumn{1}{c|}{3162} & 31,623 & 316,228\tabularnewline
ADMM & 85 (9.87) & 910 (98.8) & $>10^{3}$ & $>10^{3}$ & $>10^{3}$\tabularnewline
Blk-Diag & 96 (4.14) & 568 (20.2) & $>10^{3}$ & $>10^{3}$ & $>10^{3}$\tabularnewline
Constr I & 52 (3.75) & 244 (10.8) & 636 (20.5) & $>10^{3}$ & $>10^{3}$\tabularnewline
Constr II & 46 (1.65) & 254 (8.18) & $>10^{3}$ & $>10^{3}$ & $>10^{3}$\tabularnewline
HSS & 87 (19.8) & 378 (72.8) & 843 (212) & $>10^{3}$ & $>10^{3}$\tabularnewline
\hline 
$6\kappa^{1/4}$ & \multicolumn{1}{c|}{11} & \multicolumn{1}{c|}{34} & \multicolumn{1}{c|}{107} & 337 & 1067\tabularnewline
ADGM & 12 (1.36) & 28 (4.01) & 116 (7.77) & 199 (23.15) & 431 (69.54)\tabularnewline
ADGM(5) & 13 (1.65) & 45 (5.98) & $>10^{3}$ & $>10^{3}$ & $>10^{3}$\tabularnewline
ADGM(10) & 12 (1.28) & 37 (4.07) & $>10^{3}$ & $>10^{3}$ & $>10^{3}$\tabularnewline
ADGM(25) & 12 (1.37) & 30 (4.61) & $>10^{3}$ & $>10^{3}$ & $>10^{3}$\tabularnewline
\hline 
\end{tabular}\hfill{}\medskip{}

\caption{\label{tab:setup}Associated max. initial set-up CPU time in seconds}

\hfill{}%
\begin{tabular}{c|c|c|c}
 & Factoring  & Factoring  & Forming and eigendecomposing\tabularnewline
$n$ & $B^{T}B=L_{B}L_{B}^{T}$ & $D=L_{D}L_{D}^{T}$ & $AD^{-1}A^{T}=V\Lambda^{-1}V^{T}$\tabularnewline
\hline 
1000 & \multicolumn{1}{c|}{0.0658} & \multicolumn{1}{c|}{0.0325} & \multicolumn{1}{c}{0.3610}\tabularnewline
3000 & 1.1122 & 0.4802 & 7.0947\tabularnewline
\hline 
\end{tabular}\hfill{}
\end{table}
We solved 1000 random problems with $n=1000$, $1\le m\le n$, $1\le\ell\le m$,
and 1000 random problems with $n=3000$, $1\le m\le n$, $1\le\ell\le m$,
using ADMM, ADMM-GMRES, and the four preconditioned Krylov methods
described above, on an Intel Core i7-3960X CPU with six 3.30 GHz cores.
All six methods were set to terminate at 1000 iterations. Accuracy
was measured as the relative residual norm with respect to the saddle-point
equation (\ref{eq:saddle}). 

Tables~\ref{tab:1k}~\&~\ref{tab:3k} show the number of iterations
and CPU time to $10^{-6}$ accuracy. Table~\ref{tab:setup} shows
the associated set-up times for the three oracles in Assumption~\ref{ass:precomp}.
ADMM and all four of the preconditioner Krylov methods converge in
$O(\sqrt{\kappa})$ iterations, but ADMM-GMRES consistently converges
in $O(\kappa^{1/4})$ iterations. This square-root factor acceleration
is large enough to offset the high per-iteration cost of the method
in every case. However, note that the preconditioner ``Constr I''
does not require access to the eigendecomposition of $AD^{-1}A^{T}$,
and so enjoys a considerable set-up time advantage over ADMM-GMRES.
Once the condition number exceeds $\kappa\ge10^{4}$, the square-root
acceleration becomes large enough to offset the fairly hefty cost
of computing the eigendecomposition, making it the fastest overall.
The restarted GMRES variant enjoys some of this acceleration, but
is also susceptible to stalling once the problem becomes sufficiently
ill-conditioned. 

The constraint preconditioner ``Constr I'' performs surprisingly
well for the $n=1000$ examples in Table~\ref{tab:1k}, consistently
outperforming its $O(\sqrt{\kappa})$ iteration bound. Examining closer,
however, we find this to be an artifact of the finite convergence
property of GMRES. Once the problem size is increased to $n=3000$,
the method is no longer able to solve ill-conditioned problems with
$\kappa\ge10^{6}$. 

In all of these examples, the per-iteration costs remain approximately
constant\textemdash even for methods that relied on GMRES\textemdash due
to the relatively high cost of the preconditioners. Profiling the
code, we find that each GMRES iteration takes no more than 4 milliseconds
to execute. By contrast, even our fastest preconditioner (Constr II)
requires $\sim36$ milliseconds per application. 

\section{\label{sec:Newton_dir}Solving the SDP Newton subproblem}

Now, we consider using ADMM-GMRES to solve the Newton subproblem associated
with an interior-point solution of the semidefinite program in (\ref{eq:SDP}).
Recall from Section~\ref{subsec:applic_SDP} that the Newton subproblem
solved at each interior-point iteration has the general form
\begin{alignat}{2}
 & \text{minimize } & \frac{1}{2}\|W^{1/2}(X-\hat{X})W^{1/2}\|_{F}^{2}+p^{T}z\label{eq:matECQPNewt}\\
 & \text{subject to } & X+\sum_{i=1}^{m}z_{i}B_{i}=Q,\nonumber 
\end{alignat}
in which all matrices are $\theta\times\theta$ real symmetric. The
symmetric positive definite matrix $W$ is known as the \emph{scaling
matrix}, and is generally fully-dense. Different interior-point methods
differ in how the scaling matrix $W$ is constructed, but in every
case, the matrix becomes progressively ill-conditioned as the interior-point
method makes progress towards the solution. To be specific, the matrix
$W$ has a condition number $\mathrm{cond}(W)=\Theta(1/\epsilon)$
at an interior-point iterate with duality gap $\epsilon$, and this
gives (\ref{eq:matECQPNewt}) the condition number of $\kappa=\Theta(1/\epsilon^{2})$;
see~\cite{wright1992interior} and also~\cite{toh2002solving,toh2004solving}.
We must solve this highly ill-conditioned problem to a similar level
of accuracy as the current duality gap $\epsilon$ in order for further
progress to be made.

Indeed, (\ref{eq:matECQPNewt}) is just an instance of (\ref{eq:ecqp}).
To see this, we define the \emph{vectorization} $\vector X$ of a
given $\theta\times\theta$ matrix $X$ as the size-$\theta^{2}$
column vector made up of sequential columns of $X$ stacked on top
of each other, and the \emph{Kronecker product} $A\otimes B$ implicitly
to satisfy the Kronecker identity $(A\otimes B)\vector X=\vector(BXA^{T})$.
Using these two operations, we can rewrite (\ref{eq:matECQPNewt})
as

\begin{alignat}{2}
 & \text{minimize } & \frac{1}{2}x^{T}(W\otimes W)x+c^{T}x+p^{T}z\label{eq:ECQPNewt}\\
 & \text{subject to } & x+Bz=q,\nonumber 
\end{alignat}
where $x=\vector X,$ $c=-\vector(W\hat{X}W),$ $B=[\vector B_{1},\ldots,\vector B_{m}]$,
and $q=\vector Q$. This is an instance of (\ref{eq:ecqp}), over
the variables $x\in\R^{n}$ of dimension $n=\theta^{2}$ and $z\in\R^{m}$
of dimension $m$, and subject to $\ell=n=\theta^{2}$ equality constraints.
Note that $B$ must have full column-rank in order for the associated
SDP to be nondegenerate~\cite{alizadeh1997complementarity}, so we
must always have $m\le n$.

Standard interior-point methods solve (\ref{eq:ECQPNewt}) directly
by forming and factoring (the Schur complement of) its KKT equations,
in cubic $O(n^{3/2}m+nm^{2}+m^{3})$ time and quadratic $\Theta(m^{2})$
memory. The goal of this section is solve (\ref{eq:ECQPNewt}) at
reduced cost using ADMM and ADMM-GMRES. In Section~\ref{subsec:Implementation}
below, we explain how each iteration of ADMM can be performed in as
low as $O(n^{3/2}+m)$ time and $O(n+m)$ memory. Then, in Sections~\ref{subsec:The-SDPLIB-problems}
and~\ref{subsec:The-DIMACS-problems}, we show that ADMM-GMRES solves
the Newton subproblems associated with an interior-point solution
of the SDPLIB~\cite{borchers1999SDPLIB} and DIMACS~\cite{pataki2002dimacs}
benchmark problems to $\epsilon$-accuracy in $O(\kappa^{1/4}\log\epsilon^{-1})$
iterations. 

\subsection{\label{subsec:Implementation}Implementation}

We begin by noting that the $n\times n$ dense matrix $W\otimes W$
can be diagonalized in $O(n^{3/2}$) time and $O(n)$ memory. This
arises from the fact that $(A\otimes B)(C\otimes D)=AC\otimes BD$.
Once we have an eigendecomposition for the $\theta\times\theta$ matrix
$W=V\Lambda V^{T}$ in $O(\theta^{3})$ time and $O(\theta^{2})$
memory, we immediately have an eigendecomposition for the $\theta^{2}\times\theta^{2}$
matrix $W\otimes W=(V\otimes V)(\Lambda\otimes\Lambda)(V\otimes V)^{T}$.
This insight gives us explicit values for the Lipschitz constant $L=\lambda_{\max}(W\otimes W)=\lambda_{\max}^{2}(W)$
and the strong convexity constant $\mu=\lambda_{\min}(W\otimes W)=\lambda_{\min}^{2}(W)$,
and shows that $\kappa=L/\mu=\mathrm{cond}(W)^{2}=\Theta(1/\epsilon^{2})$.

Now, we apply ADMM to (\ref{eq:ECQPNewt}), and obtain the following
iterations\begin{subequations}
\begin{align}
x^{k+1} & =(\beta^{-1}W\otimes W+I)^{-1}(c+q-Bz^{k}-y^{k})\label{eq:s-update}\\
z^{k+1} & =(B^{T}B)^{-1}[\beta^{-1}p-B^{T}(x^{k+1}+y^{k}-q)]\label{eq:y-update}\\
y^{k+1} & =y^{k}+(x^{k+1}+Bz^{k+1}-q).\label{eq:xt-update}
\end{align}
\end{subequations}After computing the eigendecomposition $W=V\Lambda V^{T}$,
we set the algorithm parameter $\beta$ to $\sqrt{\mu L}=\lambda_{\max}(W)\lambda_{\min}(W)$,
in order to for the sequence to converge in $O(\sqrt{\kappa})$ iterations. 

Each iteration requires a single matrix-vector product with $(\beta^{-1}W\otimes W+I)^{-1},$
$B,$ $B^{T}$, and $(B^{T}B)^{-1}$. The first matrix-vector product
$\vector X=(\beta^{-1}W\otimes W+I)^{-1}\vector Y$ can be efficiently
implemented in $O(n^{3/2})$ time and $O(n)$ memory using the eigendecomposition
$W=V\Lambda V^{T}$ and the following formula 
\[
X=V\left(\left[\frac{1}{\beta^{-1}\lambda_{i}\lambda_{j}+1}\right]_{i,j=1}^{\theta}\circ(V^{T}YV)\right)V^{T},
\]
obtained by diagonalizing $(\beta^{-1}W\otimes W+I)=(V\otimes V)(\beta^{-1}\Lambda\otimes\Lambda+I)(V\otimes V)^{T}$.
Here, $\circ$ denotes the element-wise Hadamard product. 

The cost of matrix-vector products with $B$, $B^{T}$, $(B^{T}B)^{-1}$,
however, depends on the sparsity of the SDP to be solved. For many
SDPs, particularly those that arise from combinatorial problems, the
matrix $B$ is highly sparse, and the matrix $B^{T}B$ admits a sparse
Cholesky factorization, so all three operations can be performed in
linear $O(m)$ time. However, for other problems, $B^{T}B$ may suffer
from catastrophic fill-in, and in this case, the matrix-vector product
$(B^{T}B)^{-1}$ may require up to cubic $O(m^{3})$ time and quadratic
$O(m^{2})$ memory to implement. This issue of factoring $B^{T}B$
is common to all ADMM-based approaches to SDPs; see~\cite[Rem.2]{wen2010alternating},
\cite[Sec.4]{odonoghue2016conic} and the references therein. In some
cases, an iterative method like conjugate gradients may be used~\cite[Ch.4]{boyd2011distributed},
possibly alongside an incomplete factorization preconditioner, though
this can greatly increase the per-iteration cost, thereby diminishing
the appeal of ADMM.

\subsection{\label{subsec:The-SDPLIB-problems}The SDPLIB problems}

\begin{figure}
\hfill{}\subfloat[]{\includegraphics[width=0.45\columnwidth]{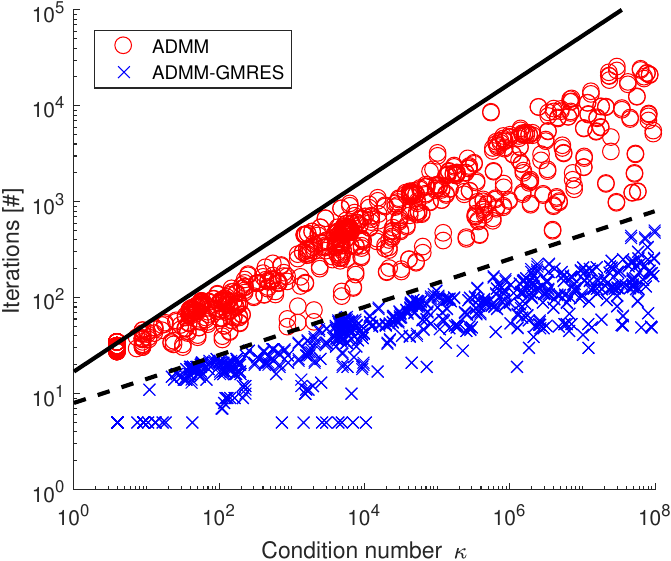}

}\hfill{}\subfloat[]{\includegraphics[width=0.45\columnwidth]{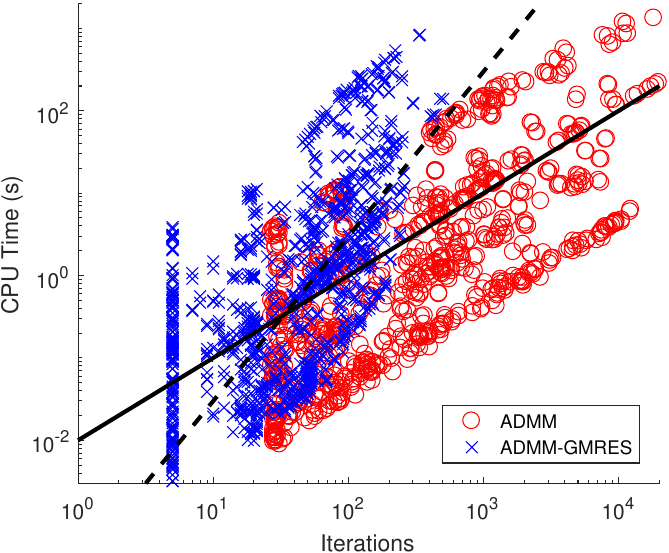}

}\hfill{}

\hfill{}\subfloat[]{\includegraphics[width=0.45\columnwidth]{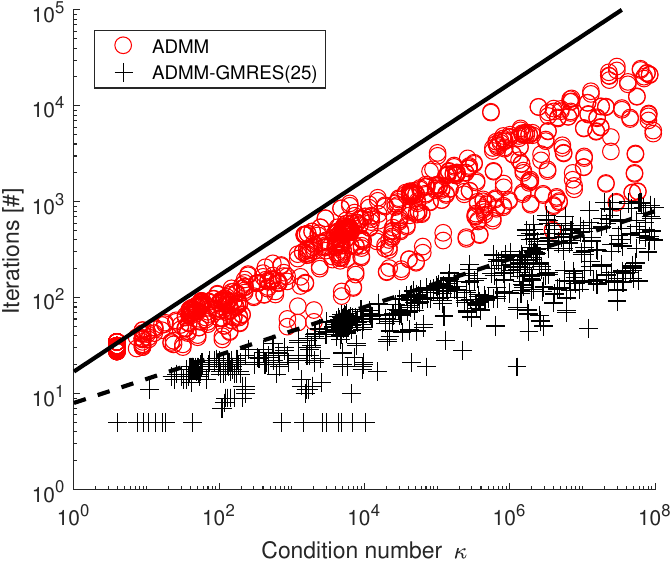}

}\hfill{}\subfloat[]{\includegraphics[width=0.45\columnwidth]{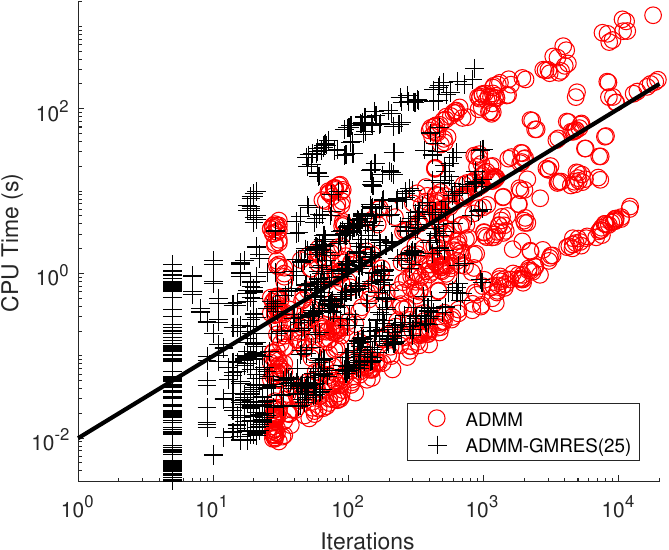}

}\hfill{}

\caption{\label{fig:it_conv_ipm}Iterations to $\epsilon=10^{-6}$ residual
convergence for the 1038 Newton direction problems described in-text:
(a) \& (b) ADMM vs ADMM-GMRES; (c) \& (d) ADMM vs ADMM-GMRES(25)}
\end{figure}
We generate instances of (\ref{eq:ECQPNewt}) using SeDuMi~\cite{sturm1999using}
over the 80 problems in the SDPLIB suite~\cite{borchers1999SDPLIB}
with $m\le700$. This collection encompasses a diversity of practical
semidefinite programs, and $m$ is small enough so that the matrix
$B^{T}B$ may always be inverted at a reasonable cost. At the same
time, the iterates $u^{k}$ have dimensions up to $n+\ell+m\le789264$
(for the problem truss8), which is large enough for the comparisons
to be realistic. For each problem, the predictor and corrector Newton
subproblems with $\kappa\le10^{8}$ are extracted and solved using
ADMM and ADMM-GMRES on an Intel Xeon E5-2687W CPU with eight 3.10
GHz cores. The stopping condition is set to be $10^{-6}$ relative
residual, i.e. when an iterate $u^{k}$ is found such that $\|u^{k}-u^{\star}\|_{M}/\|u^{\star}\|_{M}\le10^{-6}$.
The maximum number of iterations for both methods is capped at 1000. 

Fig.~\ref{fig:it_conv_ipm}a shows the number of iterations to convergence.
Results validate the $O(\sqrt{\kappa})$ figure expected of ADMM,
and the $O(\kappa^{1/4})$ figure expected of GMRES. In fact, the
multiplicative constants associated with each appear to be very similar
to the results shown earlier in Fig.~\ref{fig:first_comparison}.
Fig.~\ref{fig:it_conv_ipm}b compares the associated CPU times with
the number of iterations. The per-iteration cost of ADMM-GMRES is
constant for small $k$, but grows linearly with $k$ beyond about
30 iterations. For many of the problems considered, the square-root
factor reduction in iterations to convergence is offset by the quadratic
growth in computation time, and both methods end up using a similar
amount of time, despite the considerable difference in iteration count. 

A practical implementation of ADMM-GMRES will require the use of a
limited-memory version of GMRES. We consider the simplest approach
of restarting every 25 iterations; the results are shown in Figs.~\ref{fig:it_conv_ipm}c~\&~\ref{fig:it_conv_ipm}d.
The restarted variant requires a factor of two more iterations to
converge when compared to the usual algorithm. The amortized per-iteration
cost of restarted ADMM-GMRES is a factor of two times higher that
of basic ADMM, but the method also converges in significantly fewer
iterations.

\subsection{\label{subsec:The-DIMACS-problems}The DIMACS problems}

\begin{table}
\caption{\label{tab:dimacs}Solving DIMACS Problems using SeDuMi modified to
compute search directions using ADMM-GMRES(25). ``CPU'' is total
CPU time in seconds, ``Iter'' is the total inner ADMM-GMRES iterations
taken, and ``feas'' and ``opt'' are defined in-text.}

\hfill{}%
\begin{tabular}{c|c|c|c|c|c|c|c|c}
Key & Name & $m$ & $\theta$ & $N$ & CPU & Iter & feas & opt\tabularnewline
\hline 
a & hamming\_7\_5\_6 & 1793 & 128 & $3.46\times10^{4}$ & 2.86 & 300 & 6.59 & 7.92\tabularnewline
\hline 
b & hamming\_8\_3\_4 & 16129 & 256 & $1.47\times10^{5}$ & 11.5 & 347 & 6.37 & 8.16\tabularnewline
\hline 
c & hamming\_9\_5\_6 & 53761 & 512 & $5.78\times10^{5}$ & 85.2 & 422 & 6.45 & 7.89\tabularnewline
\hline 
d & hamming\_9\_8 & 2305 & 512 & $5.27\times10^{5}$ & 70.4 & 411 & 5.39 & 7.47\tabularnewline
\hline 
e & hamming\_10\_2 & 23041 & 1024 & $2.12\times10^{6}$ & 256 & 328 & 5.40 & 7.94\tabularnewline
\hline 
f & hamming\_11\_2 & 56321 & 2048 & $8.44\times10^{6}$ & 1262 & 217 & 3.98 & 6.36\tabularnewline
\hline 
g & toruspm3-8-50 & 512 & 512 & $5.25\times10^{5}$ & 201 & 1651 & 2.53 & 4.91\tabularnewline
\hline 
h & torusg3-8 & 512 & 512 & $5.25\times10^{5}$ & 950 & 8906 & 3.64 & 6.07\tabularnewline
\hline 
i & torusg3-15 & 3375 & 3375 & $2.28\times10^{7}$ & 26721 & 3280 & 1.68 & 4.88\tabularnewline
\hline 
j & toruspm3-15-50 & 3375 & 3375 & $2.28\times10^{7}$ & 36782 & 4387 & 2.20 & 5.42\tabularnewline
\hline 
\end{tabular}\hfill{}
\end{table}
\begin{figure}
\hfill{}\subfloat[]{\includegraphics[width=0.49\columnwidth]{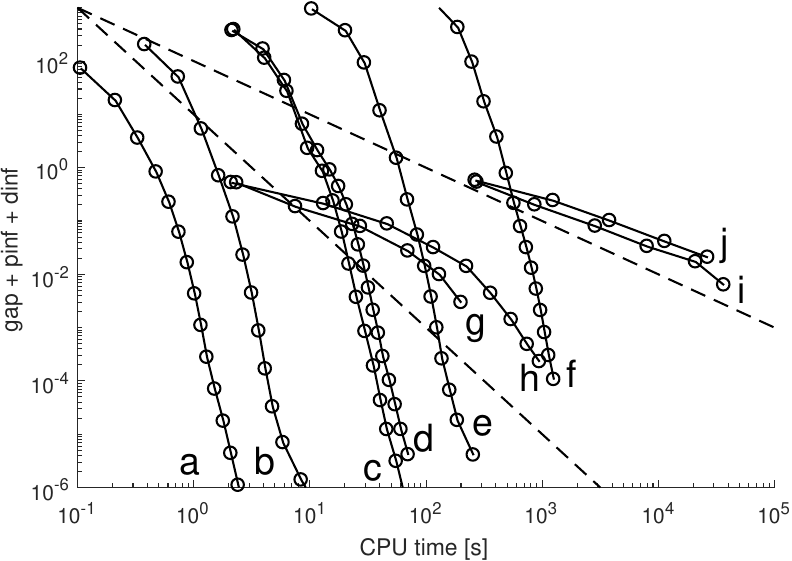}

}\hfill{}\subfloat[]{\includegraphics[width=0.49\columnwidth]{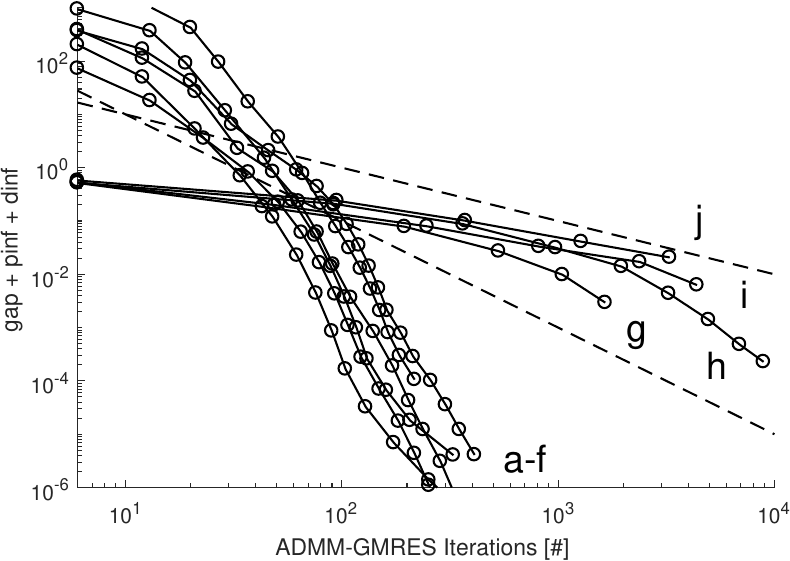}

}\hfill{}

\caption{\label{fig:dimacs}The convergence behavior of embedding ADMM-GMRES
within SeDuMi for the DIMACS problems: (a) against CPU time; (b) against
inner ADMM-GMRES iterations. Data for the keys are shown in Table~\ref{tab:dimacs}.
Each marker shows a single interior-point iteration. The dash lines
indicate sublinear $O(1/k)$ and $O(1/k^{2})$ error rates.}
\end{figure}
Finally, we incorporate ADMM-GMRES within SeDuMi, as a set of inner
iterations within an outer interior-point method, in order to solve
large-scale SDPs from the Seventh DIMACS Implementation Challenge~\cite{pataki2002dimacs}.
In other words, we modify SeDuMi to use ADMM-GMRES to compute the
Newton search directions, in lieu of its internal Cholesky-based solver,
while leaving the remainder of the solver unchanged. The Newton subproblems
are large enough to prevent the full GMRES from being used, so we
restart GMRES every 25 iterations, allowing up to 1000 inner iterations
to be performed per outer interior-point iteration. The inner ADMM-GMRES
iterations are terminated when an iterate $u^{k}$ is found such that
$\|H(u^{k}-u^{\star})\|_{\infty}\le\tau$, where $H$ is the KKT matrix
for (\ref{eq:ECQPNewt}), and $\tau$ is an absolute tolerance as
specified by the SeDuMi algorithm. Typically, $\tau$ is around one
order of magnitude smaller than the current duality gap $\epsilon$.
The outer interior-point iterations are terminated either by converging
to the desired solution accuracy, or prematurely if SeDuMi considers
the computed search direction to be too inaccurate to make further
progress. 

The numerical experiments are performed on an Intel Xeon E5-2609 v4
CPU with eight 1.70 GHz cores. Table~\ref{tab:dimacs} shows the
results, and Fig.~\ref{fig:dimacs} plots the progress of the interior-point
iterates over different interior-point iterations against time and
against inner ADMM-GMRES iterations. Here, $N=n+\ell+m=2\theta^{2}+m$
refers to the total number of primal-dual variables in the corresponding
(\ref{eq:ecqp}) problem. The accuracy of each interior-point iterate
$\{X,y,S\}$ is quantified by the number of decimal \emph{digits of
feasibility $\mathrm{feas}=-\log_{10}(\mathrm{pinf}+\mathrm{dinf})$}
and \emph{digits of optimality} $\mathrm{opt}=-\log_{10}(\mathrm{gap}),$
which are themselves defined in terms of the dimensionless DIMACS
metrics~\cite{pataki2002dimacs}:
\begin{gather*}
\mathrm{pinf}=\frac{\sqrt{\sum_{i}(\tr B_{i}Y-p_{i})^{2}}}{1+\|p\|_{2}},\qquad\mathrm{dinf}=\frac{\|\sum z_{i}B_{i}+X-Q\|_{F}}{1+\|Q\|_{F}}.\\
\mathrm{gap}=\frac{|\tr QY-p^{T}z|}{1+|\tr QY|+|p^{T}z|}.
\end{gather*}

The results show that ADMM-GMRES with restarts is able to converge
within $O(\kappa^{1/4})$ iterations for 8 out of the 10 problems,
namely the problems labeled from ``a'' to ``h''. Their corresponding
convergence curves demonstrate a time complexity of $O(1/\sqrt{\epsilon})$
and an error rate of $O(1/k^{2})$ at the $k$-th iteration. This
is the ``accelerated'' rate that we described earlier in Section~\ref{subsec:applic_SDP},
typically obtained by ``fast'' first-order methods. For the remaining
2 problems, however, the method only converges in $O(\sqrt{\kappa})$
iterations, with a time complexity of $O(1/\epsilon)$ and an error
rate of $O(1/k)$ at the $k$-th iteration. This is the usual rate
attained by solving the interior-point Newton subproblem using a standard
iterative method like conjugate gradients.

\section{Conclusions and future work}

In this paper, we have provided theoretical and numerical evidence
that ADMM-GMRES can consistently converge in $O(\kappa^{1/4})$ iterations
for a smooth strongly convex quadratic objective, despite a worst-case
bound of $O(\sqrt{\kappa})$ iterations. The order-of-magnitude reduction
in iterations over the basic ADMM method was widely observed for both
randomized examples and in the Newton subproblems for the interior-point
solution of semidefinite programs. These results confirm the possibility
for an over-relaxation scheme, momentum scheme, or otherwise, to significantly
accelerate the convergence of ADMM, beyond the constant factor typically
observed for existing schemes, and suggest the direct use of ADMM-GMRES
as a practical solution method.

It remains an open question whether the same sort of acceleration
can be extended to ADMM for general nonquadratic objectives. One possible
approach is to use GMRES to a linearized approximation of the nonlinear
fixed-point equation, in a Krylov-Newton method~\cite{brown1994convergence}.
Alternatively, a Broyden-like secant approximation may be constructed
from previous iterates, and used to extrapolate the current step,
in an Anderson acceleration method~\cite{walker2011anderson}. Both
approaches reduce to ADMM-GMRES in the case of quadratic objectives,
but further work is needed to understand their effectiveness.

\section*{Acknowledgments}

We wish to thank Jos\'{e} E. Serrall\'{e}s for proofreading an early
draft, and for assisting with the numerical results; L\'{a}szl\'{o}
Mikl\'{o}s Lov\'{a}sz for discussions on random matrix theory that
led to Section~\ref{subsec:explain_empirical}. A large part of the
paper was written during R.Y. Zhang's visit to UC Berkeley as postdoctoral
scholar, and he would like to thank his faculty mentor Javad Lavaei
for his warm accommodation. 

\bibliographystyle{siam}
\bibliography{refs}

\begin{thebibliography}{10}

\bibitem{alizadeh1997complementarity}
{\sc F.~Alizadeh, J.-P.~A. Haeberly, and M.~L. Overton}, {\em Complementarity
  and nondegeneracy in semidefinite programming}, Mathematical Programming, 77
  (1997), pp.~111--128.

\bibitem{andersen2013logarithmic}
{\sc M.~S. Andersen, J.~Dahl, and L.~Vandenberghe}, {\em Logarithmic barriers
  for sparse matrix cones}, Optimization Methods and Software, 28 (2013),
  pp.~396--423.

\bibitem{axelsson2015unified}
{\sc O.~Axelsson}, {\em Unified analysis of preconditioning methods for saddle
  point matrices}, Numerical Linear Algebra with Applications, 22 (2015),
  pp.~233--253.

\bibitem{beck2009fast}
{\sc A.~Beck and M.~Teboulle}, {\em A fast iterative shrinkage-thresholding
  algorithm for linear inverse problems}, SIAM journal on imaging sciences, 2
  (2009), pp.~183--202.

\bibitem{becker2011nesta}
{\sc S.~Becker, J.~Bobin, and E.~J. Cand{\`e}s}, {\em Nesta: A fast and
  accurate first-order method for sparse recovery}, SIAM Journal on Imaging
  Sciences, 4 (2011), pp.~1--39.

\bibitem{bellavia2013matrix}
{\sc S.~Bellavia, J.~Gondzio, and B.~Morini}, {\em A matrix-free preconditioner
  for sparse symmetric positive definite systems and least-squares problems},
  SIAM Journal on Scientific Computing, 35 (2013), pp.~A192--A211.

\bibitem{Bellavia2018}
{\sc S.~Bellavia, J.~Gondzio, and M.~Porcelli}, {\em An inexact dual
  logarithmic barrier method for solving sparse semidefinite programs},
  Mathematical Programming,  (2018).

\bibitem{benson2000solving}
{\sc S.~J. Benson, Y.~Ye, and X.~Zhang}, {\em Solving large-scale sparse
  semidefinite programs for combinatorial optimization}, {SIAM} Journal on
  Optimization, 10 (2000), pp.~443--461.

\bibitem{benzi2003optimization}
{\sc M.~Benzi, M.~J. Gander, and G.~H. Golub}, {\em Optimization of the
  {H}ermitian and skew-{H}ermitian splitting iteration for saddle-point
  problems}, BIT Numerical Mathematics, 43 (2003), pp.~881--900.

\bibitem{benzi2004preconditioner}
{\sc M.~Benzi and G.~H. Golub}, {\em A preconditioner for generalized saddle
  point problems}, SIAM Journal on Matrix Analysis and Applications, 26 (2004),
  pp.~20--41.

\bibitem{benzi2005numerical}
{\sc M.~Benzi, G.~H. Golub, and J.~Liesen}, {\em Numerical solution of saddle
  point problems}, Acta numerica, 14 (2005), pp.~1--137.

\bibitem{benzi2006eigenvalues}
{\sc M.~Benzi and V.~Simoncini}, {\em On the eigenvalues of a class of saddle
  point matrices}, Numerische Mathematik, 103 (2006), pp.~173--196.

\bibitem{bergamaschi2004preconditioning}
{\sc L.~Bergamaschi, J.~Gondzio, and G.~Zilli}, {\em Preconditioning indefinite
  systems in interior point methods for optimization}, Computational
  Optimization and Applications, 28 (2004), pp.~149--171.

\bibitem{borchers1999SDPLIB}
{\sc B.~Borchers}, {\em {SDPLIB} 1.2, a library of semidefinite programming
  test problems}, Optimization Methods and Software, 11 (1999), pp.~683--690.

\bibitem{boyd2011distributed}
{\sc S.~Boyd, N.~Parikh, E.~Chu, B.~Peleato, and J.~Eckstein}, {\em Distributed
  optimization and statistical learning via the alternating direction method of
  multipliers}, Foundations and Trends{\textregistered} in Machine Learning, 3
  (2011), pp.~1--122.

\bibitem{bramble1988preconditioning}
{\sc J.~H. Bramble and J.~E. Pasciak}, {\em A preconditioning technique for
  indefinite systems resulting from mixed approximations of elliptic problems},
  Mathematics of Computation, 50 (1988), pp.~1--17.

\bibitem{brown1994convergence}
{\sc P.~N. Brown and Y.~Saad}, {\em Convergence theory of nonlinear
  {N}ewton--{K}rylov algorithms}, SIAM Journal on Optimization, 4 (1994),
  pp.~297--330.

\bibitem{candes2005l1magic}
{\sc E.~Candes and J.~Romberg}, {\em l1-magic: Recovery of sparse signals via
  convex programming}, URL: www.acm.caltech.edu/l1magic/,  (2005).

\bibitem{chambolle2011first}
{\sc A.~Chambolle and T.~Pock}, {\em A first-order primal-dual algorithm for
  convex problems with applications to imaging}, Journal of mathematical
  imaging and vision, 40 (2011), pp.~120--145.

\bibitem{dahl2008covariance}
{\sc J.~Dahl, L.~Vandenberghe, and V.~Roychowdhury}, {\em Covariance selection
  for nonchordal graphs via chordal embedding}, Optimization Methods \&
  Software, 23 (2008), pp.~501--520.

\bibitem{davis2014faster}
{\sc D.~Davis and W.~Yin}, {\em Faster convergence rates of relaxed
  {P}eaceman-{R}achford and {ADMM} under regularity assumptions}, arXiv
  preprint arXiv:1407.5210,  (2014).

\bibitem{de1982extremal}
{\sc C.~De~Boor and J.~R. Rice}, {\em Extremal polynomials with application to
  {R}ichardson iteration for indefinite linear systems}, SIAM Journal on
  Scientific and Statistical Computing, 3 (1982), pp.~47--57.

\bibitem{deng2012global}
{\sc W.~Deng and W.~Yin}, {\em On the global and linear convergence of the
  generalized alternating direction method of multipliers}, Journal of
  Scientific Computing,  (2012), pp.~1--28.

\bibitem{donoho2001uncertainty}
{\sc D.~L. Donoho and X.~Huo}, {\em Uncertainty principles and ideal atomic
  decomposition}, Information Theory, IEEE Transactions on, 47 (2001),
  pp.~2845--2862.

\bibitem{driscoll1998potential}
{\sc T.~A. Driscoll, K.-C. Toh, and L.~N. Trefethen}, {\em From potential
  theory to matrix iterations in six steps}, {SIAM} review, 40 (1998),
  pp.~547--578.

\bibitem{elsner1987measures}
{\sc L.~Elsner and M.~Paardekooper}, {\em On measures of nonnormality of
  matrices}, Linear Algebra and its Applications, 92 (1987), pp.~107--123.

\bibitem{embree1999descriptive}
{\sc M.~Embree}, {\em How descriptive are {GMRES} convergence bounds?},
  (1999).

\bibitem{esser2010general}
{\sc E.~Esser, X.~Zhang, and T.~F. Chan}, {\em A general framework for a class
  of first order primal-dual algorithms for convex optimization in imaging
  science}, SIAM Journal on Imaging Sciences, 3 (2010), pp.~1015--1046.

\bibitem{faber1984necessary}
{\sc V.~Faber and T.~Manteuffel}, {\em Necessary and sufficient conditions for
  the existence of a conjugate gradient method}, SIAM Journal on Numerical
  Analysis, 21 (1984), pp.~352--362.

\bibitem{fischer1998minimum}
{\sc B.~Fischer, A.~Ramage, D.~J. Silvester, and A.~J. Wathen}, {\em Minimum
  residual methods for augmented systems}, BIT Numerical Mathematics, 38
  (1998), pp.~527--543.

\bibitem{fortin1983augmented}
{\sc M.~Fortin and R.~Glowinski}, {\em Augmented {L}agrangian methods},
  (1983).

\bibitem{fukuda2001exploiting}
{\sc M.~Fukuda, M.~Kojima, K.~Murota, and K.~Nakata}, {\em Exploiting sparsity
  in semidefinite programming via matrix completion {I}: {G}eneral framework},
  SIAM J. Optim., 11 (2001), pp.~647--674.

\bibitem{gabay1983applications}
{\sc D.~Gabay}, {\em Applications of the method of multipliers to variational
  inequalities}, North-Holland, Amsterdam, 1983.

\bibitem{gabay1976dual}
{\sc D.~Gabay and B.~Mercier}, {\em A dual algorithm for the solution of
  nonlinear variational problems via finite element approximation}, Computers
  \& Mathematics with Applications, 2 (1976), pp.~17--40.

\bibitem{ghadimi2015optimal}
{\sc E.~Ghadimi, A.~Teixeira, I.~Shames, and M.~Johansson}, {\em Optimal
  parameter selection for the alternating direction method of multipliers
  ({ADMM}): quadratic problems}, Automatic Control, IEEE Transactions on, 60
  (2015), pp.~644--658.

\bibitem{giselsson2014diagonal}
{\sc P.~Giselsson and S.~Boyd}, {\em Diagonal scaling in {D}ouglas-{R}achford
  splitting and {ADMM}}, in Decision and Control (CDC), 2014 IEEE 53rd Annual
  Conference on, IEEE, 2014, pp.~5033--5039.

\bibitem{glowinski1975approximation}
{\sc R.~Glowinski and A.~Marroco}, {\em Sur l'approximation, par
  {\'e}l{\'e}ments finis d'ordre un, et la r{\'e}solution, par
  p{\'e}nalisation-dualit{\'e} d'une classe de probl{\`e}mes de {D}irichlet non
  lin{\'e}aires}, Revue fran{\c{c}}aise d'automatique, informatique, recherche
  op{\'e}rationnelle. Analyse num{\'e}rique, 9 (1975), pp.~41--76.

\bibitem{goldstein2014fast}
{\sc T.~Goldstein, B.~O'Donoghue, S.~Setzer, and R.~Baraniuk}, {\em Fast
  alternating direction optimization methods}, SIAM Journal on Imaging
  Sciences, 7 (2014), pp.~1588--1623.

\bibitem{golub2003solving}
{\sc G.~H. Golub and C.~Greif}, {\em On solving block-structured indefinite
  linear systems}, SIAM Journal on Scientific Computing, 24 (2003),
  pp.~2076--2092.

\bibitem{gondzio2012matrix}
{\sc J.~Gondzio}, {\em Matrix-free interior point method}, Computational
  Optimization and Applications, 51 (2012), pp.~457--480.

\bibitem{greenbaum1997iterative}
{\sc A.~Greenbaum}, {\em Iterative methods for solving linear systems},
  vol.~17, Siam, 1997.

\bibitem{he20121}
{\sc B.~He and X.~Yuan}, {\em On the o(1/n) convergence rate of the
  {D}ouglas-{R}achford alternating direction method}, {SIAM} Journal on
  Numerical Analysis, 50 (2012), pp.~700--709.

\bibitem{henrici1962bounds}
{\sc P.~Henrici}, {\em Bounds for iterates, inverses, spectral variation and
  fields of values of non-normal matrices}, Numerische Mathematik, 4 (1962),
  pp.~24--40.

\bibitem{kadkhodaie2015accelerated}
{\sc M.~Kadkhodaie, K.~Christakopoulou, M.~Sanjabi, and A.~Banerjee}, {\em
  Accelerated alternating direction method of multipliers}, in Proceedings of
  the 21th ACM SIGKDD International Conference on Knowledge Discovery and Data
  Mining, ACM, 2015, pp.~497--506.

\bibitem{kalbat2015fast}
{\sc A.~Kalbat and J.~Lavaei}, {\em A fast distributed algorithm for
  decomposable semidefinite programs}, in Decision and Control (CDC), 2015 IEEE
  54th Annual Conference on, IEEE, 2015, pp.~1742--1749.

\bibitem{karmarkar1984new}
{\sc N.~Karmarkar}, {\em A new polynomial-time algorithm for linear
  programming}, in Proceedings of the sixteenth annual ACM symposium on Theory
  of computing, ACM, 1984, pp.~302--311.

\bibitem{keller2000constraint}
{\sc C.~Keller, N.~I. Gould, and A.~J. Wathen}, {\em Constraint preconditioning
  for indefinite linear systems}, SIAM Journal on Matrix Analysis and
  Applications, 21 (2000), pp.~1300--1317.

\bibitem{lin1999incomplete}
{\sc C.-J. Lin and J.~J. Mor{\'e}}, {\em Incomplete cholesky factorizations
  with limited memory}, SIAM Journal on Scientific Computing, 21 (1999),
  pp.~24--45.

\bibitem{madani2015admm}
{\sc R.~Madani, A.~Kalbat, and J.~Lavaei}, {\em {ADMM} for sparse semidefinite
  programming with applications to optimal power flow problem}, in Decision and
  Control (CDC), 2015 IEEE 54th Annual Conference on, IEEE, 2015,
  pp.~5932--5939.

\bibitem{mitchell1998interior}
{\sc J.~E. Mitchell, P.~M. Pardalos, and M.~G. Resende}, {\em Interior point
  methods for combinatorial optimization}, in Handbook of combinatorial
  optimization, Springer, 1998, pp.~189--297.

\bibitem{nachtigal1992hybrid}
{\sc N.~M. Nachtigal, L.~Reichel, and L.~N. Trefethen}, {\em A hybrid {GMRES}
  algorithm for nonsymmetric linear systems}, SIAM Journal on Matrix Analysis
  and Applications, 13 (1992), pp.~796--825.

\bibitem{nakata2003exploiting}
{\sc K.~Nakata, K.~Fujisawa, M.~Fukuda, M.~Kojima, and K.~Murota}, {\em
  Exploiting sparsity in semidefinite programming via matrix completion {II}:
  {I}mplementation and numerical results}, Math. Program., 95 (2003),
  pp.~303--327.

\bibitem{nemirovskii1983problem}
{\sc A.~Nemirovskii, D.~B. Yudin, and E.~R. Dawson}, {\em Problem complexity
  and method efficiency in optimization}, Wiley, 1983.

\bibitem{nesterov1983method}
{\sc Y.~Nesterov}, {\em A method of solving a convex programming problem with
  convergence rate {$O(1/k^2)$}}, in Soviet Mathematics Doklady, vol.~27, 1983,
  pp.~372--376.

\bibitem{nesterov2004introductory}
\leavevmode\vrule height 2pt depth -1.6pt width 23pt, {\em Introductory
  lectures on convex optimization}, vol.~87, Springer Science \& Business
  Media, 2004.

\bibitem{nesterov2005smooth}
{\sc Y.~Nesterov}, {\em Smooth minimization of non-smooth functions},
  Mathematical programming, 103 (2005), pp.~127--152.

\bibitem{nesterov2007smoothing}
{\sc Y.~Nesterov}, {\em Smoothing technique and its applications in
  semidefinite optimization}, Mathematical Programming, 110 (2007),
  pp.~245--259.

\bibitem{nishihara2015general}
{\sc R.~Nishihara, L.~Lessard, B.~Recht, A.~Packard, and M.~I. Jordan}, {\em A
  general analysis of the convergence of {ADMM}}, arXiv preprint
  arXiv:1502.02009,  (2015).

\bibitem{odonoghue2016conic}
{\sc B.~O'Donoghue, E.~Chu, N.~Parikh, and S.~Boyd}, {\em Conic optimization
  via operator splitting and homogeneous self-dual embedding}, Journal of
  Optimization Theory and Applications, 169 (2016), pp.~1042--1068.

\bibitem{ouyang2015accelerated}
{\sc Y.~Ouyang, Y.~Chen, G.~Lan, and E.~Pasiliao~Jr}, {\em An accelerated
  linearized alternating direction method of multipliers}, SIAM Journal on
  Imaging Sciences, 8 (2015), pp.~644--681.

\bibitem{pakazad2014distributed}
{\sc S.~K. Pakazad, A.~Hansson, and M.~S. Andersen}, {\em Distributed
  interior-point method for loosely coupled problems}, IFAC Proceedings
  Volumes, 47 (2014), pp.~9587--9592.

\bibitem{pataki2002dimacs}
{\sc G.~Pataki and S.~Schmieta}, {\em The {DIMACS} library of
  semidefinite-quadratic-linear programs}, tech. rep., Tech. Rep. Preliminary
  draft, Computational Optimization Research Center, Columbia University, New
  York, 2002.

\bibitem{patrinos2014douglas}
{\sc P.~Patrinos, L.~Stella, and A.~Bemporad}, {\em {D}ouglas-{R}achford
  splitting: Complexity estimates and accelerated variants}, in Decision and
  Control (CDC), 2014 IEEE 53rd Annual Conference on, IEEE, 2014,
  pp.~4234--4239.

\bibitem{rivlin1974chebyshev}
{\sc T.~J. Rivlin}, {\em The Chebyshev Polynomials: From Approximation Theory
  to Algebra and Number Theory}, John Wiley \& Sons, 1974.

\bibitem{rudelson2007sampling}
{\sc M.~Rudelson and R.~Vershynin}, {\em Sampling from large matrices: An
  approach through geometric functional analysis}, Journal of the ACM (JACM),
  54 (2007), p.~21.

\bibitem{rusten1992preconditioned}
{\sc T.~Rusten and R.~Winther}, {\em A preconditioned iterative method for
  saddlepoint problems}, SIAM Journal on Matrix Analysis and Applications, 13
  (1992), pp.~887--904.

\bibitem{saad2003iterative}
{\sc Y.~Saad}, {\em Iterative methods for sparse linear systems}, Siam, 2003.

\bibitem{saad1986GMRES}
{\sc Y.~Saad and M.~H. Schultz}, {\em {GMRES}: A generalized minimal residual
  algorithm for solving nonsymmetric linear systems}, {SIAM} Journal on
  scientific and statistical computing, 7 (1986), pp.~856--869.

\bibitem{simoncini2004block}
{\sc V.~Simoncini}, {\em Block triangular preconditioners for symmetric
  saddle-point problems}, Applied Numerical Mathematics, 49 (2004), pp.~63--80.

\bibitem{simoncini2004spectral}
{\sc V.~Simoncini and M.~Benzi}, {\em Spectral properties of the {H}ermitian
  and skew-{H}ermitian splitting preconditioner for saddle point problems},
  SIAM Journal on Matrix Analysis and Applications, 26 (2004), pp.~377--389.

\bibitem{sturm1999using}
{\sc J.~F. Sturm}, {\em Using {SeDuMi} 1.02, a {MATLAB} toolbox for
  optimization over symmetric cones}, Optimization methods and software, 11
  (1999), pp.~625--653.

\bibitem{toh2004solving}
{\sc K.-C. Toh}, {\em Solving large scale semidefinite programs via an
  iterative solver on the augmented systems}, {SIAM} Journal on Optimization,
  14 (2004), pp.~670--698.

\bibitem{toh2002solving}
{\sc K.-C. Toh and M.~Kojima}, {\em Solving some large scale semidefinite
  programs via the conjugate residual method}, {SIAM} Journal on Optimization,
  12 (2002), pp.~669--691.

\bibitem{trefethen2005spectra}
{\sc L.~N. Trefethen and M.~Embree}, {\em Spectra and pseudospectra: the
  behavior of nonnormal matrices and operators}, Princeton University Press,
  2005.

\bibitem{vandenberghe2015chordal}
{\sc L.~Vandenberghe, M.~S. Andersen, et~al.}, {\em Chordal graphs and
  semidefinite optimization}, Foundations and Trends in Optimization, 1 (2015),
  pp.~241--433.

\bibitem{vandenberghe1995primal}
{\sc L.~Vandenberghe and S.~Boyd}, {\em A primal-dual potential reduction
  method for problems involving matrix inequalities}, Mathematical Programming,
  69 (1995), pp.~205--236.

\bibitem{vandenberghe1996semidefinite}
\leavevmode\vrule height 2pt depth -1.6pt width 23pt, {\em Semidefinite
  programming}, SIAM review, 38 (1996), pp.~49--95.

\bibitem{walker2011anderson}
{\sc H.~F. Walker and P.~Ni}, {\em Anderson acceleration for fixed-point
  iterations}, SIAM Journal on Numerical Analysis, 49 (2011), pp.~1715--1735.

\bibitem{wen2010alternating}
{\sc Z.~Wen, D.~Goldfarb, and W.~Yin}, {\em Alternating direction augmented
  {L}agrangian methods for semidefinite programming}, Mathematical Programming
  Computation, 2 (2010), pp.~203--230.

\bibitem{wright1992interior}
{\sc M.~H. Wright}, {\em Interior methods for constrained optimization}, Acta
  numerica, 1 (1992), pp.~341--407.

\bibitem{zhang2018large}
{\sc R.~Y. Zhang, S.~Fattahi, and S.~Sojoudi}, {\em Large-scale sparse inverse
  covariance estimation via thresholding and max-det matrix completion}, in
  International Conference on Machine Learning, 2018.

\bibitem{zheng2016fast}
{\sc Y.~Zheng, G.~Fantuzzi, A.~Papachristodoulou, P.~Goulart, and A.~Wynn},
  {\em Fast admm for semidefinite programs with chordal sparsity}, in American
  Control Conference (ACC), 2017, IEEE, 2017, pp.~3335--3340.

\bibitem{zulehner2002analysis}
{\sc W.~Zulehner}, {\em Analysis of iterative methods for saddle point
  problems: a unified approach}, Mathematics of computation, 71 (2002),
  pp.~479--505.

\end{thebibliography}

\end{document}